\documentclass[11pt,reqno]{amsart}
\usepackage[margin=3.5cm]{geometry}
\usepackage{xcolor}
\usepackage{amsfonts,amsthm,amssymb,verbatim,amscd,amsmath,blindtext}
\usepackage{multirow}
\usepackage{graphicx}
\usepackage{enumitem}
\usepackage{amssymb}
\usepackage{ytableau}
\usepackage{tikz}
\usepackage{tikz-cd}
\usepackage{float,pifont}
\usepackage{mathtools}
\usepackage[pagebackref,colorlinks=true,citecolor=blue]{hyperref}
\usepackage[capitalise]{cleveref}

\theoremstyle{plain}
\newtheorem{theorem}{Theorem}[section]
\newtheorem{lemma}[theorem]{Lemma}

\newtheorem{claim}[theorem]{Claim}
\theoremstyle{definition}
\newtheorem{remark}[theorem]{Remark}
\newtheorem{notation}[theorem]{Notation}
\newtheorem{question}[theorem]{Question}

\newcommand{\ass}{\operatorname{Ass}}

\newcommand{\vv}{\operatorname{v}}
\newcommand{\p}{\mathfrak{p}}
\newcommand{\q}{\mathfrak{q}}
\newcommand{\kk}{\mathbb{K}}
\newcommand{\ini}{\operatorname{in}}

\begin{document}

\title{The v-numbers of permanental ideals}

\author{Trung Chau}
\email{chauchitrung1996@gmail.com}
\address{Chennai Mathematical Institute, Siruseri, Tamil Nadu, India}

\author{A. V. Jayanthan}
\email{jayanav@iitm.ac.in}
\address{Department of Mathematics, Indian Institute of Technology Madras, Chennai, Tamil Nadu, India - 600036.}
\begin{abstract}
In this article, we compute the $\vv$-number of $2\times 2$ permanental ideals of generic, generic symmetric, and generic Hankel matrices.
\end{abstract}

\keywords{permanent, permanental ideal, generic matrix, generic symmetric matrix, Hankel matrix, determinantal ideal, Gr\"obner basis, $\operatorname{v}$-number}

\subjclass[2020]{13C05; 13C40; 13P10; 15A15; 05E40}

\maketitle


\section{Introduction}
Let $\kk$ be a field and $R = \kk[x_1,\ldots,x_n]$ be a standard graded polynomial ring and $I$ be a graded ideal in $R$. In \cite{CSTPV20}, Cooper et. al. introduced the notion of $\vv$-number of $I$, denoted by $\vv(I)$:
\[
\vv(I) \coloneqq \min\{d \mid \exists~ f \in R_d \text{ and } \p \in \ass(I) \text{ such that } I \colon f = \p\}.
\]
They introduced this invariant to study the minimum distance function of projective Reed-Muller-type codes. Since then, there has been a lot of research on the behavior of $\vv(I)$ in comparison with algebraic/combinatorial aspects of the ideal $I$, see \cite{ASS24,BM25,DJS25,Fic25,JV21,KMT25,SK24,SS22} for a non-exhaustive list.

The permanent of a square matrix is exactly its determinant with all coefficients being $+1$, instead of $\pm 1$. For a matrix $X$ with entries in a ring $R$, the \emph{$t\times t$ permanental ideal of $X$}, denoted by $P_t(X)$, is the ideal in $R$ generated by all $t\times t$ subpermanents of $X$. Their counterpart, determinantal ideals, plays an important role and is more intensively studied in geometry and algebra. Permanental ideals, on the other hand, have proved to behave erratically, e.g., their minimal primes are not fully classified (cf. \cite{BCMV25,Kirkup} for partial answers), or their Frobenius singularities are worse than those of the corresponding determinantal ideals \cite{Chau-F}. It is worth noting that permanental ideals and their determinantal counterpart coincide when the characteristic of the base ring is 2. Thus the former is an interesting class of ideals whose behaviors depend on characteristics.

In this article, we fully compute the $\vv$-number of $2\times 2$ permanental ideals of generic, generic symmetric, and generic Hankel matrices in a polynomial ring over a field $\kk$ with $\operatorname{char} \kk \neq 2$. We note that in the case $\operatorname{char} \kk =2$, the permanental ideals, which coincide with the corresponding determinantal ideals, are prime, and thus their $\vv$-numbers are $0$. The main tool that we use for the computation of $\vv$-number of these ideals is a key result in \cite{GRV21} (see \Cref{lem:upper-bound-v-number,lem:lower-bound-v-number}). Gr\"obner basis for the permanental ideals of generic, generic Hankel and generic symmetric  matrices have been computed respectively in \cite{LS00}, \cite{GGS07} and \cite{Chau}. We use these Gr\"obner bases and the fact that if $\ini(f) \notin \ini(I) \colon \ini(J)$, then $f \notin I \colon J$ to obtain required lower bounds for the $\vv$-number.

Our finding in this article can be summerized as below:
\begin{table}[h!]
\begin{tabular}{cc|c|}
\cline{3-3}
&  & v-number of $P_2(X)$ \\ \hline
\multicolumn{1}{|c|}{\multirow{3}{*}{$X$ is $m\times n$ generic}} & $(m,n)=(2,2)$ & $0$   \\ \cline{2-3} 
\multicolumn{1}{|c|}{}                             & $m=2$ and $n\geq 2$ & 2 \\ \cline{2-3} 
\multicolumn{1}{|c|}{}                            & $m,n\geq 3$ & 3 \\ \hline
\multicolumn{1}{|c|}{\multirow{2}{*}{$X$ is $n\times n$ generic symmetric}} & $n=2$ & $0$ \\ \cline{2-3} 
\multicolumn{1}{|c|}{}
& $n\geq 3$ & 3 \\ \hline
\multicolumn{1}{|c|}{\multirow{6}{*}{$X$ is $m\times n$ generic Hankel}} & $(m,n)=(2,2)$ & $0$ \\ \cline{2-3} 
\multicolumn{1}{|c|}{}                                                      & $m\geq 3$ and $m+n\geq 10$       & \multirow{2}{*}{1}   \\ \cline{2-2}
\multicolumn{1}{|c|}{}                                                      & $(m,n)\in \{(3,6),(4,5)\}$       &                      \\ \cline{2-3} 
\multicolumn{1}{|c|}{}                                                      & $m=2$ and $n\geq 4$              & \multirow{2}{*}{2}   \\ \cline{2-2}
\multicolumn{1}{|c|}{}                                                      & $(m,n)\in \{(3,4),(3,5),(4,4)\}$ &                      \\ \cline{2-3} 
\multicolumn{1}{|c|}{}                                                      & $(m,n)\in \{(2,3),(3,3)\}$       & 3                    \\ \hline
\end{tabular}
\end{table}

We note that in the case $(m,n)=(2,2)$, the ideal $P_2(X)$ is known to be prime in all cases where $X$ is generic/generic symmetric/generic Hankel (cf. \cite[Lemma~3.2]{Chau-F}). In particular, in this case, $\vv(P_2(X))=0$ by definition. In this article we will thus ignore this case.

In the second section, we fix the notation for the rest of the paper and recall some results from the literature which are needed in our proofs. In the next three sections, we compute the $\vv$-number of permanental ideals of the generic, generic symmetric and generic Hankel matrices.

\section{Preliminaries}

In this section, we introduce/recall some notation and terminology that will aid us in later sections. 

The next lemma follows directly from the definition of $\vv$-number. We write it here for ready reference, as it is our main tool to show an upper bound for $\vv$-numbers.

 \begin{lemma}\label{lem:upper-bound-v-number}
     Let $I$ be a homogeneous ideal. If $f$ is a homogeneous polynomial such that $(I\colon f)$ is a prime ideal. Then  $\vv(I) \leq \deg(f)$.
 \end{lemma}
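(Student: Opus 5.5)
The argument unwinds the definition of $\vv(I)$: once we have a prime $\p := (I\colon f)$, it only remains to check that $\p \in \ass(I)$. Then $f$ itself witnesses the minimum, so $\vv(I)\le \deg(f)$.

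\textbf{Step 1: a hidden hypothesis.} We first make explicit an assumption the statement leaves implicit, namely $f\notin I$. If $f\in I$, then $I\colon f=R$, and $R$ is not a prime ideal; so the hypothesis that $(I\colon f)$ is prime already forces $f\notin I$. In particular $f\ne 0$, and since $f$ is homogeneous, $\deg(f)$ is a well-defined nonnegative integer.

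\textbf{Step 2: $\p$ is associated to $I$.} Consider the class $\bar f \in R/I$. It is nonzero by Step 1, and its annihilator is
\[
\operatorname{Ann}_{R/I}(\bar f) = \{g\in R : gf\in I\} = I\colon f = \p .
\]
A prime ideal that arises as the annihilator of an element of $R/I$ is by definition an associated prime of $I$, so $\p\in\ass(I)$.

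\textbf{Step 3: conclusion.} The pair $(f,\p)$ satisfies $f\in R_{\deg f}$, $\p\in\ass(I)$ and $I\colon f=\p$. Hence $\deg(f)$ belongs to the set whose minimum defines $\vv(I)$, which gives $\vv(I)\le\deg(f)$.

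Every step is routine. The only point that needs any care is Step 1, which confirms that $f\notin I$ is automatic and so guarantees that $\bar f\neq 0$ in $R/I$.
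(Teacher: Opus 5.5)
Your proof is correct and is the same argument the paper has in mind; the paper gives no proof and says only that the lemma follows directly from the definition of $\vv$-number. Your note that primality of $I\colon f$ forces $f\notin I$, so that $I\colon f=\operatorname{Ann}_{R/I}(\bar f)$ is an associated prime, is the only detail that needs stating, and you state it.
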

For a homogeneous ideal $I$ of a graded ring $A = \oplus_{n\geq 0}A_n$, set $\alpha(I) := \min\{\deg f ~\mid~ f \in I \text{ is homogeneous}\}$. The next two lemmas are simple observations which we require in later sections.

\begin{lemma}\label{lem:alpha-smaller-for-colon-of-small-ideals}
    Let $I,J,K$ be homogeneous ideals in a polynomial ring $R$. If $J\subseteq K$, then
    \[
     \alpha((I\colon K)/I)\geq \alpha((I\colon J)/I).
    \]
\end{lemma}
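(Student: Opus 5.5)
The plan is to prove the stronger statement that the quotient modules are nested, namely
\[
(I\colon K)/I \subseteq (I\colon J)/I \quad \text{inside } R/I.
\]
Once this inclusion is established, the inequality $\alpha((I\colon K)/I)\geq \alpha((I\colon J)/I)$ follows formally. Here I read $\alpha$ of a graded module as the least degree of a nonzero homogeneous element, with the convention $\alpha(0)=\infty$, which is the natural extension of the definition given for ideals. Since $\alpha$ is a minimum over homogeneous nonzero elements, taking the minimum over a subset can only increase it.

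The first step is the containment $I\colon K\subseteq I\colon J$. Take $f\in I\colon K$, so that $fK\subseteq I$. Because $J\subseteq K$, we have $fJ\subseteq fK\subseteq I$, and therefore $f\in I\colon J$. Both colon ideals contain $I$, and both are homogeneous because $I,J,K$ are. Consequently $(I\colon K)/I$ and $(I\colon J)/I$ are graded submodules of $R/I$, and the first is contained in the second.

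The second step passes to degrees. Let $\bar f$ be a nonzero homogeneous element of $(I\colon K)/I$ of degree $d$. A homogeneous representative $f$ can be chosen, since the module is graded. This $f$ lies in $I\colon K\subseteq I\colon J$ and not in $I$, so $\bar f$ is a nonzero homogeneous element of $(I\colon J)/I$ of the same degree $d$. Taking the minimum over all such $d$ gives the inequality.

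There is no genuine obstacle here. The only points needing care are the homogeneity bookkeeping, namely that colon ideals of homogeneous ideals are homogeneous so that $\alpha$ is defined, and the convention $\alpha(0)=\infty$. That convention handles the degenerate case $I\colon K=I$ and does no harm when $I\colon J=I$, since then $I\colon K=I$ as well.
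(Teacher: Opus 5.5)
Your proof is correct and takes the same approach as the paper: $J\subseteq K$ gives $I\colon K\subseteq I\colon J$, and the inequality then follows directly from the definition of $\alpha$. Your extra bookkeeping (homogeneity of the colon ideals, using nonzero homogeneous elements, and the convention $\alpha(0)=\infty$) just makes explicit what the paper leaves implicit.
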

\begin{proof}
Since $J\subseteq K$, we have $I\colon K \subseteq I\colon J$. The result then follows from definition.
\end{proof}

\begin{lemma}\label{lem:lower-bound-v-number}
Let $I$ be a homogeneous ideal. Then
    \[
    \vv(I) \geq \min \{ \alpha((I\colon P)/I) \mid P\in \ass(I) \} = \min \{ \alpha((I\colon P)/I) \mid P\in \operatorname{Min}(I) \}.
    \]
\end{lemma}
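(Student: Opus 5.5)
The plan is to prove the inequality first and then the equality of the two minima.

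\emph{The inequality.} Let $d=\vv(I)$. By definition there are a homogeneous $f\in R_d$ and a prime $\p\in\ass(I)$ with $I\colon f=\p$. Since $\p\neq R$, we have $f\notin I$. Moreover $f\p\subseteq I$, because $\p=I\colon f$; hence $f\in I\colon\p$. So $f$ is a homogeneous element of $(I\colon\p)\setminus I$ of degree $d$, and therefore $\alpha((I\colon\p)/I)\leq d$. This gives
\[
\vv(I)\geq \min\{\alpha((I\colon P)/I)\mid P\in\ass(I)\}.
\]
One degenerate case needs care: if $I\colon f=\p$ occurs with $f$ of degree $0$, then $I$ itself is prime. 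In that case $I\colon I=R$, so $\alpha((I\colon I)/I)=0$ and the statement is still consistent.

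\emph{The equality of the two minima.} Since $\operatorname{Min}(I)\subseteq\ass(I)$, the minimum over $\ass(I)$ is at most the minimum over $\operatorname{Min}(I)$. For the reverse inequality, take any $P\in\ass(I)$ and choose a minimal prime $Q\subseteq P$ of $I$. Then \Cref{lem:alpha-smaller-for-colon-of-small-ideals}, applied with $J=Q\subseteq K=P$, gives
\[
\alpha((I\colon P)/I)\geq\alpha((I\colon Q)/I).
\]
Hence every value attained over $\ass(I)$ is bounded below by a value attained over $\operatorname{Min}(I)$, and the two minima agree.

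\emph{Convention.} There is no real obstacle here; the only point to settle is the convention for $\alpha$ of a zero module. We take $\alpha(0)=\infty$. This matters only if $I\colon P=I$, which cannot happen for $P\in\ass(I)$, since $P=I\colon g$ for some $g\notin I$ and then $g\in(I\colon P)\setminus I$. So every term in both minima is finite.
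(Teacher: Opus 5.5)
Your proof is correct. For the equality of the two minima you argue exactly as the paper does: you pick a minimal prime $Q\subseteq P$ and apply \Cref{lem:alpha-smaller-for-colon-of-small-ideals} with $J=Q\subseteq K=P$.

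For the inequality your route differs. The paper gives no argument and simply cites \cite[Theorem 10 (b) and (c)]{GRV21}. You instead derive it directly from the definition: if $f\in R_d$ and $I\colon f=\p$ is prime, then $f\notin I$ and $f\p\subseteq I$. So $f$ gives a nonzero homogeneous element of degree $d$ in $(I\colon\p)/I$. This one-line observation is all the lemma needs, so your version makes the statement self-contained. The cited theorem is a finer result about the local invariants $\vv_{\p}(I)$ and their relation to $(I\colon\p)/I$, and the paper does not use it anywhere else.

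Two small remarks. Your separate treatment of $\deg f=0$ is redundant, since the general argument already covers a nonzero constant $f$. Your convention that $\alpha$ is the least degree of a \emph{nonzero} homogeneous element, with $\alpha(0)=\infty$, is the right reading of the paper's definition of $\alpha$.
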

\begin{proof}
The first inequality follows from \cite[Theorem 10 (b) and (c)]{GRV21}. The equality follows from \Cref{lem:alpha-smaller-for-colon-of-small-ideals}.
\end{proof}

The computation of colon ideals are important even just to bound the v-numbers. A common theme in this article is the use of initial ideals and Gr\"obner basis in bounding the v-numbers. For a homogeneous ideal $I$, let $I_{\leq k}$ be the subideal of $I$ generated by all  polynomials in $I$ of degree at most $k$, and $I_{\langle k\rangle}$ be the subideal of $I$ generated by all polynomials in $I$ of degree $k$, for each integer $k$.

\begin{lemma}\label{lem:Grobner-initial}
    Let $I,J$ be homogeneous ideals and $f$ a homogeneous polynomial. If $f\in I\colon J$, then $\ini(f)\in \ini(I)\colon \ini(J)$. Moreover, if $\ini(J)=(g_1,\dots, g_t)$ where $\deg g_i=\deg g_j$ for any $i,j=1,\dots ,t$, then \[
    \ini(f)\in \left( \ini(I)_{\langle \deg f + \deg g_1 \rangle} \colon \ini(J)\right)_{\langle \deg f \rangle}.\]
\end{lemma}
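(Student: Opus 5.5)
The plan is to prove both parts directly from the multiplicativity of initial terms and the definition of the colon ideal, tracking degrees for the second part.

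\emph{First part.} Suppose $f\in I\colon J$, and take an arbitrary element of $\ini(J)$. Since $\ini(J)$ is generated as a monomial ideal by the initial monomials $\ini(g)$ with $g\in J$, it suffices to check that $\ini(f)\ini(g)\in \ini(I)$ for every $g\in J$. For such $g$ we have $fg\in I$. For any monomial order, $\ini(fg)=\ini(f)\ini(g)$, because $\kk[x_1,\dots,x_n]$ is a domain and the leading monomial of a product is the product of the leading monomials. Hence $\ini(f)\ini(g)=\ini(fg)\in\ini(I)$, and therefore $\ini(f)\in \ini(I)\colon\ini(J)$. If $f=0$ the claim is vacuous under the usual convention $\ini(0)=0$.

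\emph{Second part, degree bookkeeping.} Let $d=\deg f$ and $e=\deg g_i$ for all $i$. First I will choose elements $h_i\in J$ with $\ini(h_i)=g_i$. Since $J$ is homogeneous, each $h_i$ can be taken homogeneous of degree $e$: replacing $h_i$ by its homogeneous component containing the leading monomial keeps it in $J$ and keeps the same initial term. Then $fh_i\in I$ is homogeneous of degree $d+e$. Because $I$ is homogeneous, $fh_i$ lies in the subideal of $I$ generated by its degree-$(d+e)$ elements. Its initial monomial $\ini(f)g_i$ is therefore the initial term of a degree-$(d+e)$ element of $I$, so it lies in $\ini(I)_{\langle d+e\rangle}$, where this is read as the ideal generated by the initial monomials of degree-$(d+e)$ elements of $I$, equivalently the degree-$(d+e)$ generated part of $\ini(I)$. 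Since the $g_i$ generate $\ini(J)$, this gives
\[
\ini(f)\in \ini(I)_{\langle d+e\rangle}\colon \ini(J).
\]
Finally, $\ini(f)$ is a monomial of degree $d$, so it belongs to the degree-$d$ generated part of this colon ideal, which is the stated membership.

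\emph{Expected obstacle.} The only delicate point is interpreting $\ini(I)_{\langle k\rangle}$ consistently. The notation was defined for $I$, and it must be checked that the degree-$k$ part of $\ini(I)$ equals the span of the initial monomials of degree-$k$ elements of $I$. This holds because for a homogeneous ideal the initial ideal is generated by initial terms of homogeneous elements. I would state this explicitly, and note that either reading gives the same ideal since both are generated by the same set of monomials.
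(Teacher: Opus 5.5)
Your proof is correct. The paper states this lemma without proof, treating it as immediate from $\ini(fg)=\ini(f)\ini(g)$, and your first part is exactly that standard argument.

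For the second part, the detour through homogeneous lifts $h_i\in J$ is not needed. The first part already gives $\ini(f)g_i\in\ini(I)$ for each $i$. This element is homogeneous of degree $\deg f+e$, so it lies in $\ini(I)_{\langle \deg f+e\rangle}$ directly. The outer restriction to degree $\deg f$ then follows as you say.

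The direct route is also slightly more general. It works verbatim when the $g_i$ are merely homogeneous of the common degree $e$. Your lifting step, choosing $h_i\in J$ with $\ini(h_i)=g_i$, tacitly requires the $g_i$ to be monomials. That assumption is harmless, since a monomial ideal generated in a single degree $e$ is generated by its degree-$e$ monomials, but it is unnecessary.

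Your remark that the two readings of $\ini(I)_{\langle k\rangle}$ agree for homogeneous $I$ is correct. With the direct argument, though, you only ever need the paper's literal definition applied to the monomial ideal $\ini(I)$.
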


We now recall a result from \cite{Chau} which we require in later sections. We remark that the techniques originate earlier (\cite{LS00,GGS07}).

 \begin{lemma}[\protect{\cite[Lemma 5.5]{Chau}}]\label{lem:colon-equality}
    Let $I$ and $Q$ be ideals of $R$, $x$ an element in $R$, and $n$ a positive integer. Assume that the following holds:
    \begin{enumerate}
        \item $Q$ is $P$-primary where $P=\sqrt{Q}$ is a prime ideal;
        \item $x\notin P$;
        \item $I\subseteq Q$;
        \item $Q\subseteq (I\colon x^n)$.
    \end{enumerate}
    Then we have $Q=(I\colon x^n)$.
\end{lemma}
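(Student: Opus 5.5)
The containment $Q\subseteq (I\colon x^n)$ is hypothesis (4), so the work is the reverse inclusion $(I\colon x^n)\subseteq Q$. This direction should come directly from primary decomposition properties of $Q$, with no computation involved.

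Let $f\in (I\colon x^n)$, so that $x^n f\in I$. By hypothesis (3), $I\subseteq Q$, hence $x^n f\in Q$. Since $Q$ is $P$-primary by hypothesis (1), whenever a product $ab$ lies in $Q$ and $a\notin Q$, some power of $b$ lies in $Q$, which means $b\in \sqrt{Q}=P$. Apply this with $a=f$ and $b=x^n$. If $f\notin Q$, then $x^n\in P$. Because $P$ is prime, this gives $x\in P$, contradicting hypothesis (2). Therefore $f\in Q$.

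This shows $(I\colon x^n)\subseteq Q$, and combined with hypothesis (4) we get equality. There is no real obstacle here. The one point needing care is to invoke the primary property in the form "$ab\in Q$ and $a\notin Q$ imply $b\in\sqrt{Q}$," and then to use primality of $P$ to pass from $x^n\in P$ to $x\in P$.
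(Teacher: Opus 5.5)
Your proof is correct: $x^n f\in I\subseteq Q$ together with $x^n\notin P$ forces $f\in Q$ by the primary property, which gives $(I\colon x^n)\subseteq Q$, and hypothesis (4) supplies the reverse inclusion. The paper does not prove this lemma itself (it is quoted from \cite[Lemma 5.5]{Chau}); your argument is the standard one, $(I\colon x^n)\subseteq (Q\colon x^n)=Q$, and nothing is missing.
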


\section{v-number of permanental ideals of generic matrices}

Throughout this section, let $X=(x_{ij})$ be a generic $m\times n$, where $m\leq n$, matrix of indeterminates over a field $\kk$. We impose the  \emph{diagonal ordering} on the variables
\[
x_{ij} \succ x_{kl} \text{ if either } i<k \text{ or } \begin{cases}
    i=k \text{ and }\\
    j<l
\end{cases}.
\]
With respect to this ordering, the leading term of any sub-permanent of $H$ is the product of its diagonal, and thus this is often called a \emph{diagonal ordering}. Recall that the Gr\"obner basis of $P_2(X)$ with respect to this ordering was given in \cite[Theorem~3.1]{LS00}.

\begin{theorem}
Let $X$ be a $2\times n$ generic matrix, with $n \geq 3$. Then $\vv(P_2(X)) = 2$
\end{theorem}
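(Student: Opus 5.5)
The plan is to prove the two inequalities $\vv(P_2(X))\leq 2$ and $\vv(P_2(X))\geq 2$ separately. Throughout, $I=P_2(X)$, $p_{ij}=x_{1i}x_{2j}+x_{1j}x_{2i}$ is the permanent of columns $i,j$, and $\operatorname{char}\kk\neq 2$.

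\emph{Minimal primes.} I first record the minimal primes of $I$, which I expect to be the following.
\begin{itemize}
\item $P_1=(x_{11},\dots,x_{1n})$, the first row.
\item $P_2=(x_{21},\dots,x_{2n})$, the second row.
\item For each $i<j$, the prime $Q_{ij}=(p_{ij})+(x_{1k},x_{2k}\mid k\neq i,j)$.
\end{itemize}
A point of $V(I)$ with both rows nonzero should have at most two nonzero columns. In particular $I$ is not prime for $n\geq 3$, so $\vv(I)\geq 1$.

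\emph{Key identity and upper bound.} For three distinct columns $a,b,c$ one has
\[
x_{1b}\,p_{ac}+x_{1a}\,p_{bc}-x_{1c}\,p_{ab}=2\,x_{1a}x_{1b}x_{2c}.
\]
Since $2$ is invertible, $x_{1a}x_{1b}x_{2c}\in I$, and symmetrically $x_{2a}x_{2b}x_{1c}\in I$. For $\vv(I)\leq 2$ I would use \Cref{lem:upper-bound-v-number}, and I would establish primality of the colon via \Cref{lem:colon-equality} applied with $Q=P_2$ (which is prime, hence $P_2$-primary).

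The hypotheses of \Cref{lem:colon-equality} require an element $x\notin P_2$ with $I\subseteq P_2$ and $P_2\subseteq I\colon x^2$. The identity immediately gives $x_{2c}\,x_{1a}x_{1b}\in I$ for every $c\notin\{a,b\}$. The difficulty is the entries $x_{2a}$ and $x_{2b}$ in the chosen columns, so a single monomial $x_{1a}x_{1b}$ will not be killed by all of $P_2$.

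My first attempt is therefore a linear form $\ell=\sum_k \lambda_k x_{1k}$ in the first row, taking $f=\ell^2$. Expanding $x_{2c}\ell^2$, all terms with three distinct column indices lie in $I$. The remaining terms are of the form $x_{1c}x_{1k}x_{2c}$ and $x_{1k}^2x_{2c}$, and I would try to cancel them modulo $I$ using the $p_{ck}$. If no such $\ell$ works, I would pass to a quadratic $f$ in the first-row variables chosen so that $x_{2c}f\in I$ for all $c$, again verifying primality through \Cref{lem:colon-equality}. Finding this $f$ is the step I expect to be the main obstacle.

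\emph{Lower bound.} For $\vv(I)\geq 2$ I use \Cref{lem:lower-bound-v-number}: it suffices to show that $\alpha((I\colon P)/I)\geq 2$ for every minimal prime $P$, i.e.\ that no linear form $\ell\notin I$ satisfies $\ell P\subseteq I$. Since $I$ is generated by quadrics, $\ell x\in I$ for a variable $x\in P$ forces $\ell x$ to be a $\kk$-linear combination of the $p_{ij}$.

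By \Cref{lem:Grobner-initial} and the Gr\"obner basis of \cite[Theorem~3.1]{LS00}, whose degree-two part of $\ini(I)$ consists of the diagonal monomials $x_{1i}x_{2j}$ with $i<j$, this reduces to a monomial check. Every $P$ contains some variable $x_{1k}$ or $x_{2k}$, and I would show that the only variables $y$ with $y\,x_{1k}\in\ini(I)_{\langle 2\rangle}$ are second-row variables in larger columns. Such $\ini(\ell)$ cannot simultaneously work for all the generators of $P$. Concretely, for $P_1$ I would compare $x_{11}$ against $x_{1n}$, and for $Q_{ij}$ I would use two variables from different columns outside $\{i,j\}$, which exist since $n\geq 3$.
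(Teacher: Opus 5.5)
\textbf{The upper bound has a genuine gap.} It is not only that you have not found $f$: with $Q=P_2$ (or $P_1$), no quadric can work.

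Both $I$ and $P_2$ are homogeneous for the row grading and for the column $\mathbb{Z}^n$-grading. Hence $I\colon P_2$ is multigraded, and it suffices to test multihomogeneous quadrics modulo $I$. Up to scalars these are $x_{1a}x_{1b}$; $x_{1a}x_{2b}$ with $a\neq b$ (the other monomial $x_{1b}x_{2a}$ is removed using $p_{ab}$); $x_{1a}x_{2a}$; and $x_{2a}x_{2b}$. Each one fails to be killed by some variable of $P_2$:
\begin{itemize}
\item $x_{1a}x_{1b}x_{2a}\notin I$, because $I$ in that multidegree is spanned by $x_{1a}p_{ab}$, and is zero if $a=b$.
\item $x_{1a}x_{2b}^2\notin I$, because only $x_{2b}p_{ab}$ lives there.
\item $x_{1a}x_{2a}x_{2c}\notin I$ for $c\neq a$, because only $x_{2a}p_{ac}$ lives there.
\item $I$ has nothing in row degree $(0,3)$.
\end{itemize}
So $\alpha((I\colon P_2)/I)\geq 3$; indeed $I\colon x_{11}x_{12}x_{13}=P_2$. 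In particular $f=\ell^2$ already fails on its component $\lambda_c^2x_{1c}^2x_{2c}$, since $I$ has no element of column degree $3e_c$. Your remark that $x_{2a},x_{2b}$ are the obstruction is exactly the point: you need a prime that does not contain them.

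\textbf{The fix.} Use $Q_{ab}$ with $f=x_{1a}x_{2b}$; the paper takes $Q_{12}$ and $x_{11}x_{22}$. Your identity and its row-swapped version give $fx_{1k},fx_{2k}\in I$ for all $k\notin\{a,b\}$, and $fp_{ab}\in I$ trivially, so $Q_{ab}\subseteq I\colon f$. Since $Q_{ab}$ is prime, contains $I$, and does not contain $f$, \Cref{lem:colon-equality} with exponent $1$ gives $I\colon f=Q_{ab}$, hence $\vv(I)\leq 2$.

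\textbf{The lower bound} is sound in substance and differs from the paper. The paper uses the single variable $x_{11}$, restricts $\ell$ to second-row variables via $\ini(I)_{\langle 2\rangle}$, and then specializes $x_{11}=0$ in $x_{11}\ell=\sum c_{rs}p_{rs}$. You do need to fix your choice of variables, for two reasons:
\begin{itemize}
\item For $n=3$ there is only one column outside $\{i,j\}$.
\item Two variables from different columns need not conflict: $x_{12},x_{13}\in Q_{14}$ both allow $\ini(\ell)=x_{24}$.
\end{itemize}
Instead, use $x_{1n}$ for $P_1$ and for $Q_{ij}$ with $j<n$, and $x_{21}$ for $P_2$ and for $Q_{ij}$ with $i>1$. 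For $Q_{1n}$, use the pair $x_{1k},x_{2k}$ from a single column $1<k<n$.
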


\begin{proof}
First we show that $\vv(P_2(X)) \leq 2$. It follows from \cite[Theorem 4.1]{LS00} that $\p \coloneqq (x_{13},\ldots,x_{1n}, x_{23},\ldots,x_{2n}, x_{11}x_{22}+x_{12}x_{21})$ is an associated prime of $P_2(X)$. We claim that $\p = P_2(X)\colon x_{11}x_{22}$. Indeed, 
observe that $x_{11}x_{22} \notin \p$. Moreover, by \cite[Lemma 2.1]{LS00}, $x_{11}x_{2j}x_{1j}, x_{11}x_{22}x_{2k} \in \p$ for all $3 \leq j, k \leq n$. Hence $\p \subseteq P_2(X)\colon x_{11}x_{22}$. Therefore, using \Cref{lem:colon-equality} we can conclude that $\p = P_2(X)\colon x_{11}x_{22}$. This implies that $\vv(P_2(X)) \leq  2$ by \Cref{lem:upper-bound-v-number}, as desired. 

It remains to show that $\alpha((P_2(X) : \q)/P_2(X)) \geq 2$ for any associated prime $\q$ of $P_2(X)$. By \Cref{lem:lower-bound-v-number}, it suffices to show that $P_2(X)\colon \q$ does not contain any linear form. Suppose otherwise that  $0\neq f = \sum_i a_ix_{1i} + \sum_j b_jx_{2j} \in P_2(X)\colon \q$. By \cite[Theorem~4.1]{LS00}, any associated prime of $P_2(X)$ contains a variable. Without loss of generality, assume that $x_{11}\in P_2(X)$. Then $f \in P_2(X)\colon \q \subseteq P_2(X)\colon x_{11}$. By the Gr\"obner basis of $P_2(X)$ given in \cite[Theorem 3.1]{LS00}, we have
\[
\ini(P_2(X))_{\leq \deg f + \deg x_{11}} = \ini(P_2(X))_{\leq 2} = (x_{1i}x_{2j}\ \vert \ 1\leq i<j\leq n).
\]
Thus by \Cref{lem:Grobner-initial}, we have
\[
\ini(f)\in \left( \ini(P_2(X))_{\leq 2} \colon \ini(g)\right)_{\langle 1  \rangle} = (x_{2j}\ \vert \ 2\leq j\leq n),
\]
which implies that $a_i=0$ for any $i\in [n]$, i.e., $f = \sum_{j=2}^n b_jx_{2j}$. Since $x_{11}f\in P_2(X)$, we have $x_{11}f=\sum_{1\leq r<s\leq n}c_{rs}(x_{1r}x_{2s}+x_{1s}x_{2r})$ for some constants $c_{rs}$. Specializing $x_{11}=0$, we obtain
\[
\sum_{2\leq s\leq n}c_{1s}(x_{1s}x_{2r})+ \sum_{2\leq r<s\leq n}c_{rs}(x_{1r}x_{2s}+x_{1s}x_{2r}) = 0\times f=0.
\]
Since all the monomial summands of the above sum are distinct, we conclude that $c_{rs}=0$ for any $1\leq r<s \leq n$. In particular, this implies that $f=0$, a contradiction, as desired.
\end{proof}

We now proceed to compute the $\vv$-number of permanental ideals of larger matrices. We begin by obtaining a lower bound for the $\vv$-number.
\begin{lemma}\label{lem:alpha-generic-3}
    Assume that $m\geq 3$. Then 
    \[
    \alpha\left(\frac{\bigcap_{i=1}^m(P_2(X)\colon x_{i1})}{P_2(X)}\right) \geq 3.
    \]
\end{lemma}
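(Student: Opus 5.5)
We must show that no nonzero homogeneous element of degree at most $2$ in $\bigcap_{i=1}^m (P_2(X)\colon x_{i1})$ survives modulo $P_2(X)$. Degree $0$ is trivial, since $x_{i1}\notin P_2(X)$. The plan is to handle degree $1$ and degree $2$ separately, each time combining \Cref{lem:Grobner-initial} with the Gr\"obner basis of \cite[Theorem 3.1]{LS00}. We use the diagonal order, in which the degree-$2$ part of $\ini(P_2(X))$ is generated by the products $x_{ij}x_{kl}$ with $i<k$ and $j\neq l$ (the diagonals of the $2\times 2$ subpermanents). The degree-$3$ part of $\ini(P_2(X))$ consists of these multiples together with the cubic Gr\"obner basis elements of \cite{LS00}, whose initial terms have the form $x_{ij}x_{kj}x_{kl}$ or similar "hook" monomials.

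\textbf{Linear forms.} Suppose $f$ is a linear form with $x_{i1}f\in P_2(X)$ for every $i$. Then $x_{i1}f$ is a $\kk$-linear combination of $2\times 2$ permanents. We would specialize as in the $2\times n$ case: set $x_{i1}=0$, so that a linear combination of the permanents not involving column $1$ and of the products $x_{k1}x_{il}$ (coming from permanents using column $1$ and row $i$) must vanish. Since these monomials are distinct, all coefficients vanish, forcing $f=0$. Alternatively, \Cref{lem:Grobner-initial} gives that $\ini(f)$ lies in $(\ini(P_2(X))_{\langle 2\rangle}\colon x_{i1})_{\langle 1\rangle}$ for each $i$. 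Taking $i=1$ forces $\ini(f)=x_{kl}$ with $k\ge 2$ and $l\ge 2$. Taking $i=m$, the colon by $x_{m1}$ contains $x_{kl}$ only when $k<m$ and $l\neq 1$, so $k\le m-1$. We can then subtract and iterate, or use the direct specialization argument.

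\textbf{Quadrics.} Let $f$ be a quadric in the intersection, reduced modulo the Gr\"obner basis, so that no monomial of $f$ lies in $\ini(P_2(X))$. We need to show $f=0$. By \Cref{lem:Grobner-initial} applied to each $J=(x_{i1})$, every monomial of $f$ that survives is a standard monomial $u$ with $x_{i1}u\in\ini(P_2(X))$. The plan has three steps.

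\begin{enumerate}
\item Classify the standard quadratic monomials $u$. These are products of two variables in the same row, or in the same column, or in "anti-diagonal" position, i.e.\ $x_{ij}x_{kl}$ with $i<k$ and $j>l$ in the permanental setting.
\item Determine for which $u$ the product $x_{i1}u$ becomes an initial term for all $i=1,\dots,m$ simultaneously. With $m\ge 3$, we would show that no standard quadric satisfies this for all three rows $1$, a middle row, and $m$. This is where $m\geq 3$ enters: for $m=2$ a quadric such as $x_{11}x_{22}$ does work.
\item Conclude that $\ini(f)$, which is itself standard, cannot exist, so $f\in P_2(X)$.
\end{enumerate}

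The main obstacle is step 2. It requires precise knowledge of the cubic initial terms from \cite{LS00}, since $x_{i1}u$ may be divisible by a cubic leading term rather than a quadratic one. We would enumerate cases according to the rows and columns of the two variables of $u$: same row, same column, or anti-diagonal, and whether column $1$ or row $i$ is involved. In each case we would exhibit an index $i\in\{1,2,m\}$ such that $x_{i1}u$ is a standard monomial.

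If pure monomial reasoning fails for some case, because the leading term of $f$ is not the only obstruction, we would fall back on the direct specialization argument used in the $2\times n$ proof. Write $x_{i1}f$ as a combination of $G_{rs}\ell$, with $G_{rs}$ the permanents and $\ell$ linear. Specialize to kill $x_{i1}$, and compare monomials across two different choices of $i$ to force $f\in P_2(X)$.
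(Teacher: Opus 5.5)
\textbf{There is a genuine gap: the quadric case rests on a false claim, so your main route cannot work.} Your treatment of degrees $0$ and $1$ is fine.

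Your step~2 needs that no standard quadratic monomial $u$ satisfies $x_{i1}u\in\ini(P_2(X))$ for all $i$. Take $u=x_{13}x_{m2}$; for $m=3$ this is $x_{13}x_{32}$.
\begin{enumerate}
\item It is an anti-diagonal product, hence standard.
\item For $r<m$, $x_{r1}x_{m2}$ is the diagonal of the submatrix on rows $r,m$ and columns $1,2$. So $x_{r1}u\in\ini(P_2(X))$.
\item For $r=m$, the monomial $x_{13}x_{m1}x_{m2}$ lies in $P_2(X)$ itself. Writing $p_{jl}$ for the permanent on rows $1,m$ and columns $j,l$, one has $2x_{13}x_{m1}x_{m2}=x_{m2}p_{13}-x_{m3}p_{12}+x_{m1}p_{23}$.
\end{enumerate}
Thus $u\in\bigcap_i(\ini(P_2(X))\colon x_{i1})$ while $u\notin\ini(P_2(X))$. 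A reduced $f$ with $\ini(f)=u$ is invisible to your test, whichever rows you check.

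This is structural. For an anti-diagonal pair $x_{ij}x_{kl}$ with $i<k$ and $j>l\geq 2$, the product $x_{r1}x_{ij}x_{kl}$ avoids $\ini(P_2(X))$ exactly when $r>k$. A pair meeting the last row has no such $r$.

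Two smaller slips. First, the quadratic leading terms are the $x_{ij}x_{kl}$ with $i<k$ and $j<l$, not $j\neq l$. In particular $x_{m1}x_{kl}$ is never a leading term, contrary to your linear-case remark. Second, $x_{11}x_{22}$ is not standard, so it is not an $m=2$ example; $x_{13}x_{22}$ is.

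Your fallback does not close the gap. In degree $3$ the products $G_{rs}\ell$ are linearly dependent. Already in a $3\times 3$ submatrix, the nine products $x_{ab}G$ in the multidegree of $x_{13}x_{22}x_{31}$ span only a $5$-dimensional subspace of the span of the six permutation monomials. So after setting $x_{i1}=0$ no ``distinct monomials'' comparison is available. That trick only works in degree $2$, where the permanents have disjoint supports.

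What is missing is a way to use the lower terms of $f$, not just $\ini(f)$. The paper does this with specializations:
\begin{enumerate}
\item Setting everything outside row $i$ to $0$ sends $P_2(X)$ to $0$. Applied to $x_{i1}f$, this kills the same-row terms.
\item On a $2\times 2$ submatrix through column $1$, $P_2(X)$ becomes principal. This kills the same-column terms and the terms meeting column $1$.
\item On a $3\times 3$ submatrix containing the pair, column $1$ and a third row, $f$ becomes $a\,x_{13}x_{22}+b\,x_{13}x_{32}+c\,x_{23}x_{32}$. Then $a=0$ because $\ini(x_{31}f)=a\,x_{13}x_{22}x_{31}\notin\ini(P_2(X))$.
\end{enumerate}
The coefficients $b$ and $c$ are exactly the ones your test misses. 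The paper covers them by symmetry; a fixed term order cannot see that symmetry, so you would need an actual reduction of the whole product. For example, using $x_{21}x_{32}\equiv -x_{22}x_{31}$ and $x_{22}x_{23}x_{31}\in P_2(X)$,
\[
x_{21}(b\,x_{13}x_{32}+c\,x_{23}x_{32})\equiv -b\,x_{13}x_{22}x_{31}\pmod{P_2(X)},
\]
which forces $b=0$. This holds even though the leading term $x_{13}x_{21}x_{32}$ lies in $\ini(P_2(X))$.
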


\begin{proof}
Equivalently, we want to show that no element in the module $\frac{\bigcap_{i=1}^m(P_2(X)\colon x_{i1})}{P_2(X)}$ is of degree $2$. Suppose  that there exists a quadratic homogeneous polynomial $f \in \bigcap_{i=1}^m(P_2(X)\colon x_{i1})$. It now suffices to show that $f\in P_2(X)$. Set 
    \[
    f=\sum_{\substack{y,z\in X\\ y\succeq z}}a_{yz}yz.
    \]
Observe that we can replace $f$ with $f-g$ for any $g\in P_2(X)$. Therefore, we can assume that $a_{yz}=0$ for any $y,z\in X$  that form the diagonal of some $2\times 2$ submatrix of $X$.  We proceed with the following claim.

    \begin{claim}\label{clm:y-z-same-row}
        We have $a_{yz}=0$ whenever $y,z$ are in the same row (and not necessarily distinct).
    \end{claim}
    \begin{proof}
        Set $y=x_{ij}$ and $z=x_{ik}$ for some $j\leq k$. By the symmetricity of the arguments, we can assume that $i=1$, $j\in \{1,2\}$, and $k\in \{1,2,3\}$. It then suffices to show that $a_{x_{11}^2}=a_{x_{11}x_{12}}=a_{x_{12}^2}=a_{x_{12}x_{13}}=0$. Indeed, since $f\in P_2(X)\colon x_{11}$, we have $x_{11}f\in P_2(X)$. By setting all variables, except  $x_{11}$, $x_{12}$, and $x_{13}$, to $0$, we obtain 
        \[
        x_{11}\bar{f} \in \overline{P_2(X)}=(0),
        \]
        where $\bar{ ~ }$ denotes the object after substituting the desired variables to zero. Therefore 
        \[
        0 = \bar{f}= a_{x_{11}^2}x_{11}^2 + a_{x_{12}^2}x_{12}^2 + a_{x_{13}^2}x_{13}^2 + a_{x_{11}x_{12}}x_{11}x_{12} + a_{x_{11}x_{13}}x_{11}x_{13}+a_{x_{12}x_{13}}x_{12}x_{13}.
        \]
        Hence $a_{x_{11}^2}=a_{x_{11}x_{12}}=a_{x_{12}^2}=a_{x_{12}x_{13}}=0$, as desired.
    \end{proof}
    \begin{claim}\label{clm:y-z-same-column-and-antidiagonal}
        We have $a_{yz}=0$ whenever one of the following holds:
        \begin{itemize}
            \item $y,z$ are in the same column;
            \item either $y$ or $z$ is in the first column of $X$.  
        \end{itemize}
    \end{claim}
    \begin{proof}
        By similar arguments as in the proof of \cref{clm:y-z-same-row}, it suffices to show that $a_{x_{11}x_{21}}=a_{x_{12}x_{22}}=a_{x_{21}x_{12}}=0$. Indeed, since $f\in P_2(X)\colon x_{21}$, we have $x_{21}f\in P_2(X)$. Setting all variables, except $x_{11}$, $x_{12}$, $x_{21}$, and $x_{22}$, to $0$, we obtain 
        \[
        x_{21}\bar{f} \in \overline{P_2(X)}=(x_{11}x_{22}+x_{12}x_{21}).
        \]
        Observe that we already have $a_{x_{11}x_{12}}=a_{x_{21}x_{22}}=a_{x_{ij}^2}=0$, for any $1\leq i,j\leq 2$, from \cref{clm:y-z-same-row}, and $a_{x_{11}x_{22}}=0$ since these two variables form a diagonal of a submatrix of $X$.
        Thus, under our specialization 
        we have
        \[
        0=\bar{f}= a_{x_{11}x_{12}}x_{11}x_{12} +  a_{x_{21}x_{22}}x_{21}x_{22} +  a_{x_{21}x_{12}}x_{21}x_{12}.
        \]
        Observe that none of the monomial summand of $x_{21}\bar{f}$ is divisible by $x_{11}x_{22}$, the leading term of $x_{11}x_{22}+x_{12}x_{21}$. Thus $x_{21}\bar{f} \in \overline{P_2(X)} $ implies that  $a_{x_{11}x_{21}}=a_{x_{12}x_{22}}=a_{x_{21}x_{12}}=0$, as desired.
    \end{proof} 
    By \cref{clm:y-z-same-row} and \cref{clm:y-z-same-column-and-antidiagonal}, we now have $f=\sum_{y\prec z} a_{yz}yz,$ where $\{y,z\}$ ranges among the set of two entries both of which are not in the first column. 
    Observe that, by setting all variables, except the entries in the submatrix of $X$ that contains $y$, $z$, the first column, and a row that does not contain $y$ or $z$, to $0$, we may as well assume that $X$ is of size $3\times 3$. And by the similarity of arguments, it suffices to show that $a_{x_{13}x_{22}}=0$. As consequences of \cref{clm:y-z-same-row} and \cref{clm:y-z-same-column-and-antidiagonal}, $f$ reduces to the form
    \[
    f= a_{x_{13}x_{22}}x_{13}x_{22} + a_{x_{13}x_{32}}x_{13}x_{32}+ a_{x_{23}x_{32}}x_{23}x_{32}.
    \]
    Since $f\in P_2(X)\colon  x_{31}$ (the index makes sense since $m\geq 3$), we have
    \[
    a_{x_{13}x_{22}}x_{13}x_{22}x_{31} + a_{x_{13}x_{32}}x_{13}x_{32}x_{31}+ a_{x_{23}x_{32}}x_{23}x_{32}x_{31} = x_{31}f \in P_2(X).
    \]
    This implies that, if $a_{x_{13}x_{22}} \neq 0$, then $\ini(x_{31}f) = a_{x_{13}x_{22}}x_{13}x_{22}x_{31} \in \ini(P_2(X))$. 
This is a contradiction since by \cite[Theorem~3.1]{LS00} the initial ideal of $P_2(X)$ does not contain $x_{13}x_{22}x_{31}$. Therefore, we have $a_{x_{13}x_{22}}=0$, as desired.
This concludes the proof of the Lemma.    
\end{proof}
\begin{remark}
It may be noted that, due to the symmetricity of arguments, in \Cref{lem:alpha-generic-3}, if we take the intersection over all variables on any column of $X$ or if we take the intersection over all variables on any row of $X$, we get the same conclusion.
\end{remark}
Finally, we now determine the $\vv$-number of $P_2(X)$.
\begin{theorem}\label{thm:main-v-number-generic}
Assume that $m, n\geq 3$. Then $\operatorname{v}(P_2(X))=3$.
\end{theorem}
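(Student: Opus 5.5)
We prove the two inequalities separately.

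\emph{Upper bound.} By \Cref{lem:upper-bound-v-number}, it suffices to find a cubic form $f$ with $P_2(X)\colon f$ prime. The natural candidate comes from the associated primes of $P_2(X)$ described in \cite[Theorem~4.1]{LS00}. For $m,n\geq 3$ these include primes of the shape
\[
\p=(x_{ij} : (i,j)\notin\{1,2\}\times\{1,2\})+(x_{11}x_{22}+x_{12}x_{21}),
\]
as well as primes generated by all variables outside a single row or column. For such a $\p$, we will try $f=x_{11}x_{22}g$, where $g$ is a suitable entry outside the $2\times2$ block, or alternatively $f=x_{11}x_{22}x_{33}$-type monomials. We then check the four hypotheses of \Cref{lem:colon-equality}. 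Concretely, we verify that each generator of $\p$ times $f$ lies in $P_2(X)$, using the identities of \cite[Lemma~2.1]{LS00}; these give that products such as $x_{11}x_{22}x_{ij}$ with $(i,j)$ outside the block lie in $P_2(X)$ for the appropriate cubic choice. We also verify that some power of $f$ (or a factor of it) avoids $\p$. This yields $\p=P_2(X)\colon f$ and hence $\vv(P_2(X))\leq 3$. In the $2\times n$ case a quadric sufficed, whereas here an extra row forces an extra factor. Identifying exactly which cubic works is a bookkeeping step that we expect to carry out by inspecting the degree-$3$ relations of \cite[Lemma~2.1]{LS00}.

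\emph{Lower bound.} By \Cref{lem:lower-bound-v-number}, it suffices to show that $\alpha((P_2(X)\colon\q)/P_2(X))\geq 3$ for every minimal prime $\q$. By \cite[Theorem~4.1]{LS00}, each minimal prime of $P_2(X)$ for $m,n\geq 3$ contains all the entries of some column of $X$, or all the entries of some row. More precisely, the minimal primes are either generated by all variables but those of a $2\times 2$ block together with the permanent of that block, or by all variables off one row or one column. In either case $\q$ contains every variable of some entire row or column, and we would double-check this against the list in \cite{LS00}. Suppose, say, that $\q$ contains the column $\{x_{1j},\dots,x_{mj}\}$. Then $(x_{1j},\dots,x_{mj})\subseteq\q$, so
\[
P_2(X)\colon\q\subseteq \bigcap_{i=1}^m (P_2(X)\colon x_{ij}).
\]
By \Cref{lem:alpha-generic-3} together with the following remark, which covers any column or row by symmetry, the right-hand side modulo $P_2(X)$ has no nonzero elements of degree $\leq 2$. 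We must also rule out degree $1$; this follows since any linear form in the intersection would, after multiplying by a linear form, give a quadric in it. Alternatively, we can argue directly as in the $2\times n$ case. Then \Cref{lem:alpha-smaller-for-colon-of-small-ideals} gives $\alpha\geq 3$, and hence $\vv(P_2(X))\geq 3$.

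The main obstacle is the upper bound: choosing the right prime and the right cubic $f$, and verifying that $\p\subseteq P_2(X)\colon f$. This requires explicit membership of cubic products in $P_2(X)$. I would first try $\p$ as the block-type prime and $f=x_{11}x_{22}x_{33}$, or $x_{11}x_{22}$ multiplied by a variable outside the block but in that prime's complement. If that fails, I would switch to a row-type prime with $f$ a product of three entries in distinct columns.
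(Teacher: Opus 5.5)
Your lower bound follows the paper's argument. Every minimal prime contains a whole row or column, so $P_2(X)\colon\q\subseteq\bigcap_i (P_2(X)\colon x_{ij})$, and \Cref{lem:alpha-generic-3}, the remark after it, and \Cref{lem:alpha-smaller-for-colon-of-small-ideals} give $\alpha\geq 3$. Your separate ``degree $1$'' step is unnecessary, since $\alpha\geq 3$ already excludes linear forms.

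The upper bound has a genuine gap: you never fix $f$, and the candidates you do name fail. Take the block prime $\p=(x_{ij}:(i,j)\notin\{1,2\}^2)+(x_{11}x_{22}+x_{12}x_{21})$. Every variable outside the block is a generator of $\p$, so $x_{11}x_{22}g$ with $g$ outside the block, including $x_{11}x_{22}x_{33}$, lies in $\p$. Hypothesis (2) of \Cref{lem:colon-equality} therefore fails; a factor of $f$ avoiding $\p$ is not enough, and there is no variable outside the block in the complement of $\p$. In fact the colon is never $\p$ for these choices. Use the identity
\[
2x_{1a}x_{1b}x_{2c}=x_{1b}(x_{1a}x_{2c}+x_{1c}x_{2a})+x_{1a}(x_{1b}x_{2c}+x_{1c}x_{2b})-x_{1c}(x_{1a}x_{2b}+x_{1b}x_{2a})
\]
for distinct columns $a,b,c$, together with its row/column permutations and transpose; this is the content of \cite[Lemma~2.1]{LS00}. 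It gives $x_{11}x_{22}g\in P_2(X)$ whenever $g$ lies in row $1$ or $2$ or in column $1$ or $2$, so then the colon is the unit ideal. For $g=x_{ij}$ with $i,j\geq 3$, it gives $x_{12}x_{22}x_{ij}\in P_2(X)$, so $x_{12}\in P_2(X)\colon f$ although $x_{12}\notin\p$.

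What works is your last fallback, and it is exactly the paper's choice. Take the row-type prime $\q=(x_{ij}\mid 2\le i\le m,\ 1\le j\le n)$ and $f=x_{11}x_{12}x_{13}$, three entries of the omitted row. Then $f\notin\q$ and $P_2(X)\subseteq\q$. For $i\geq 2$ and any $j$, choosing distinct $a,b\in\{1,2,3\}\setminus\{j\}$ gives $x_{1a}x_{1b}x_{ij}\in P_2(X)$, so $\q f\subseteq P_2(X)$. Now \Cref{lem:colon-equality} yields $P_2(X)\colon f=\q$, hence $\vv(P_2(X))\leq 3$. If you want to keep the block prime instead, $f$ must be a cubic in the block variables, for example $f=x_{11}x_{12}x_{22}$. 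The same identities show $\p f\subseteq P_2(X)$, and $f\notin\p$.
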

\begin{proof}
First we show that $\vv(P_2(X)) \leq 3$. For this, we claim that \[P_2(X) : x_{11}x_{12}x_{13} = (x_{ij} \mid 2\leq i\leq m, 1\leq j \leq n)\eqqcolon \q.\]
We have
\begin{itemize}
    \item $\q$ is prime;
    \item $x_{11}x_{12}x_{13}\notin \q$;
    \item $P_2(X)\subseteq \q$ (see, e.g., \cite[Theorem 5.7]{LS00});
    \item $\q \subseteq (P_2(X)\colon x_{11}x_{12}x_{13})$ (this follows from \cite[Lemma 2.1]{LS00}).
\end{itemize}
Thus our claim follows from \cref{lem:colon-equality}. In particular, we have $\vv(P_2(X)) \leq 3$.

Let $\p$ be an associated prime of $P_2(X)$. Since $3 \leq m \leq n$, it follows from \cite[Theorem 5.7]{LS00} that $P_2(X)$ contains either all variables of one row or all variables one column of $X$. Without loss of generality, assume that $P$ contains all variables in the first column, i.e., $x_{i1} \in P$ for all $i = 1, \ldots, m$. Then it follows from \Cref{lem:alpha-smaller-for-colon-of-small-ideals} and \Cref{lem:alpha-generic-3} that 
\[\alpha\left(\frac{P_2(X) \colon \p}{P_2(X)}\right) \geq \alpha\left(\frac{P_2(X) \colon (x_{11},\ldots,x_{m1})}{P_2(X)}\right) = \alpha \left(\frac{\cap_{i=1}^m P_2(X) \colon x_{i1}}{P_2(X)}\right) \geq 3.
\]
Hence by \Cref{lem:lower-bound-v-number}, we get $\vv(P_2(X)) \geq 3$. Combining with the reverse inequality, we get $\vv(P_2(X)) = 3$, as desired.
\end{proof}

\section{v-number of permanental ideals of generic symmetric matrices}

In this section, let $Y$ denote an $n\times n$ generic symmetric matrix of indeterminates over a field $\kk$, i.e., we set
\[
Y=\begin{pmatrix}
    y_{11} & y_{12} & \cdots & y_{1n}\\
    y_{12} & y_{22} & \cdots & y_{2n}\\
    \vdots & \vdots & \ddots & \vdots\\
    y_{1n} & y_{2n} & \cdots & y_{nn}
\end{pmatrix}.
\]
We impose the \emph{diagonal ordering} on these indeterminates: 
\[
y_{11}\succ y_{12} \succ \cdots \succ y_{1n} \succ y_{22} \succ y_{23} \succ \cdots \succ y_{nn}.
\]
Note that the Gr\"obner basis of $P_2(Y)$ with respect to this ordering was given in \cite{Chau}, which we shall use extensively in this section.

\begin{theorem}[\protect{\cite[Theorem~3.1]{Chau}}]\label{thm:Grobner-symmetric}
	Let $Y$ be an $n\times n$ generic symmetric matrix with indeterminates as entries. The following collection of polynomials is a reduced Gr{\"o}bner basis for $P_2(Y)$ with respect to any diagonal ordering:
	\begin{enumerate}
		\item[(1a)]  The subpermanents $y_{ii}y_{jj}+y_{ij}^2$,\  $i<j$;
		\item[(1b)]  the subpermanents $y_{ii}y_{jk}+y_{ij}y_{ik}$,\  $j<k$,\  $i\neq j,k$;
		\item[(1c)]  $y_{ij}y_{kl}$,\  $i,j,k,l$ are distinct, $i<j$, $k<l$;
		\item[(2a)]  $y_{il}y_{jl}y_{kl}$,\  $i<j<l$,\  either $k=i$, $k=j$, or $j< k< l$;
		\item[(2b)]  $y_{il}y_{jl}y_{kk}$,\  $i<j<k<l$;
		\item[(2c)]  $y_{ij}y_{ik}y_{jj}$,\  $i<j<k$;
		\item[(3a)]  $y_{ij}y_{ik}y_{il}$,\  $i<k<l$,\  either $j=l$, $j=k$, or $i< j< k$;
		\item[(3b)]  $y_{ik}y_{il}y_{jj}$,\  $i<j<k<l$;
		\item[(3c)]  $y_{ik}y_{jk}y_{jj}$,\  $i<j<k$;
		\item[(6a)]  $y_{ik}^3y_{jj}$,\  $i<j<k$;
		\item[(6b)]  $y_{ik}^2y_{jj}^2$,\ $i<j<k$.
	\end{enumerate}
\end{theorem}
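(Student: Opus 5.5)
The plan is to prove membership of every listed polynomial in $P_2(Y)$ first, and then to prove the Gr\"obner property with Buchberger's criterion. Throughout I assume $\operatorname{char}\kk\neq 2$, as in the rest of the paper; the monomials of types (2)--(6) are only obtained after dividing by $2$.

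\emph{Step 1 (membership).} The polynomials of types (1a) and (1b) are $2\times 2$ subpermanents of $Y$ by definition. Their leading terms under the diagonal ordering are $y_{ii}y_{jj}$ and $y_{ii}y_{jk}$, since the diagonal variables dominate. For (1c), where $i,j,k,l$ are distinct, I would combine the two subpermanents on rows $\{i,k\}$ with columns $\{j,l\}$ and on rows $\{i,l\}$ with columns $\{j,k\}$. The analogous third subpermanent then cancels the cross terms, and dividing by $2$ leaves $y_{ij}y_{kl}$. The cubic monomials (2a)--(3c) come from the same trick one degree higher. One multiplies a subpermanent $y_{ab}y_{cd}+y_{ad}y_{cb}$ by a variable and subtracts suitable multiples of other subpermanents and of the monomials of type (1c). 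An alternating sign pattern around a cycle of three permanents produces $2\cdot(\text{monomial})$. This mirrors \cite[Lemma 2.1]{LS00} in the generic case. The quartics (6a) and (6b) arise in the same way from (1a) together with (2c) and (3c). For example, $y_{ik}^2y_{jj}^2$ is obtained by multiplying $y_{jj}y_{kk}+y_{jk}^2$ by suitable terms and reducing with the cubics. I would write each such identity down explicitly; all of them live in a symmetric submatrix on at most four indices.

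\emph{Step 2 (Buchberger).} Let $G$ be the listed set. I would check that every $S$-pair of elements of $G$ reduces to $0$ modulo $G$. By the product criterion, pairs whose leading terms are coprime are automatic, so only overlapping leading terms need to be treated. Such a pair involves at most about five distinct indices. Since the ordering is invariant under order-preserving relabelings of the indices, it is enough to treat $n\le 5$, or perhaps $n\le 6$, up to the relative order pattern of the indices. Many pairs are between two monomials, and their $S$-polynomial is $0$. The genuine work lies in pairs involving (1a) or (1b) with other elements. For these I would organize the cases by the overlapping variable and reduce each $S$-polynomial by hand, using the monomials of types (1c)--(6b) to kill the leftover terms.

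\emph{Step 3 (reducedness).} I would check that no listed leading term divides another, and that no non-leading monomial of a binomial in (1a) or (1b) is divisible by a leading term. That non-leading monomial is $y_{ij}^2$ or $y_{ij}y_{ik}$, and no listed leading term divides it, because every degree-$2$ leading term either contains a diagonal variable or involves four distinct indices.

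The main obstacle is Step 2: the case analysis of the $S$-pairs between (1a)/(1b) and the cubic and quartic monomials is long and error-prone. As an alternative that avoids Buchberger, I would first try the following. Let $M$ be the monomial ideal generated by the listed leading terms. Show $M\subseteq\ini(P_2(Y))$ using Step 1, and then prove equality by comparing Hilbert functions. This would require an independent computation of $\dim_\kk (R/P_2(Y))_d$, for instance via the primary decomposition and \Cref{lem:colon-equality}-type colon arguments. That computation seems harder, so Buchberger's criterion reduced to small $n$ remains the primary route.
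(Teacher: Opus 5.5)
The paper does not prove this theorem; it imports it from \cite{Chau}. So there is no in-paper argument to compare against. Your outline (membership, then Buchberger's criterion checked on boundedly many indices) is a viable standard route. However, it rests on a wrong identification of leading terms, and Steps 2 and 3 fail as written.

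Under the diagonal ordering $y_{11}\succ y_{12}\succ\cdots\succ y_{1n}\succ y_{22}\succ\cdots$ the diagonal variables do not dominate; for example $y_{12}\succ y_{22}$. When $j<i<k$, the (1b) binomial $y_{ii}y_{jk}+y_{ij}y_{ik}$ is the permanent of the submatrix with rows $j<i$ and columns $i<k$. Its main diagonal is $y_{ji}y_{ik}$, so for every diagonal ordering its leading term is $y_{ij}y_{ik}$, not $y_{ii}y_{jk}$. For $n=3$ this gives $\ini(y_{12}y_{23}+y_{13}y_{22})=y_{12}y_{23}$, which is how the paper reduces with this element in the proof of \Cref{lemma:n=3}.

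With your identification, $y_{jj}y_{ik}$ ($i<j<k$) would be a leading term. It divides the listed monomials $y_{ij}y_{ik}y_{jj}$ (2c), $y_{ik}y_{jk}y_{jj}$ (3c), $y_{ik}^3y_{jj}$ (6a) and $y_{ik}^2y_{jj}^2$ (6b). So your Step 3 check would fail and the list could not be reduced. Your claim that every quadratic leading term contains a diagonal variable or has four distinct indices is also false: $y_{12}y_{23}$ is a counterexample. For the same reason, the overlaps you would enumerate in Step 2 are the wrong ones.

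The correct rule is: the leading term of (1b) is $y_{ii}y_{jk}$ when $i<j$ or $k<i$, and $y_{ij}y_{ik}$ when $j<i<k$. The trailing terms are then $y_{ij}^2$, $y_{ij}y_{ik}$ (for $i<j$ or $k<i$), and $y_{ii}y_{jk}$ (for $j<i<k$). You must also check that the last of these is not divisible by any leading term; it is not. The S-pair analysis must then be set up with these leading terms.

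Smaller points:
\begin{enumerate}
\item Type (1c) also requires dividing by $2$, not only types (2)--(6).
\item You should record that $P_2(Y)$ lies in the ideal generated by the list. This holds because a subpermanent on four distinct indices is a sum of two monomials of type (1c).
\item The reduction to small matrices needs $n\le 6$. A (1b) binomial on three indices and a (2b) or (3b) monomial on four indices can share only a diagonal variable, for example $y_{33}y_{56}+y_{35}y_{36}$ and $y_{14}y_{24}y_{33}$.
\end{enumerate}
Finally, the substantive claim, that every S-pair reduces to zero, is only promised in your proposal. Even after the correction, Step 2 is a plan for a finite (possibly machine) verification rather than an argument.
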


For any two integers $1\leq r<s\leq n$, set 
\[
\q_{rs}(Y)= ( y_{rr}y_{ss}+y_{rs}^2, \ y_{ij} \mid 1\leq i\leq j\leq n \text{ and } (i,j)\notin \{(r,s),(r,r),(s,s)\} ).
\]
These ideals are exactly the minimal primes of $P_2(Y)$, \cite[Theorem 4.1]{Chau}. Thus we can obtain a lower bound of $\vv(P_2(Y))$ by determining  $\alpha\left(\frac{P_2(Y)\colon \q_{ij}(Y)}{P_2(Y)}\right)$ for any $i$ and $j$ and using \cref{lem:lower-bound-v-number}. In other words, we need an analog of \cref{lem:alpha-generic-3} in this case of a symmetric matrix. 
\begin{notation}\label{yyhat}
For the rest of the section, $Y$ always denotes an $n \times n$ generic symmetric matrix and $\hat{Y}$ denotes the $(n-1) \times (n-1)$ matrix obtained by deleting the first row and the first column of $Y$. In particular, $\hat{Y}$ is also generic symmetric.
\end{notation}
\begin{lemma}\label{lem:retract}
With the notation as in \Cref{yyhat},  the natural map $\phi: \kk[\hat{Y}]/P_2(\hat{Y}) \to \kk[Y]/P_2(Y)$ splits. In particular $P_2(Y)\cap \kk[\hat{Y}]=P_2(\hat{Y})$.
\end{lemma}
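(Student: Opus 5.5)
We will construct an explicit retraction. Define the $\kk$-algebra map $\pi\colon \kk[Y]\to \kk[\hat{Y}]$ by sending $y_{1j}\mapsto 0$ for all $1\le j\le n$ and fixing every $y_{ij}$ with $2\le i\le j\le n$. Let $\iota\colon \kk[\hat{Y}]\to\kk[Y]$ be the inclusion. Then $\pi\circ\iota=\mathrm{id}_{\kk[\hat{Y}]}$. The plan is to check that both maps respect the permanental ideals, i.e. $\iota(P_2(\hat{Y}))\subseteq P_2(Y)$ and $\pi(P_2(Y))\subseteq P_2(\hat{Y})$. Then they descend to maps $\phi\colon \kk[\hat{Y}]/P_2(\hat{Y})\to\kk[Y]/P_2(Y)$ and $\bar\pi\colon \kk[Y]/P_2(Y)\to \kk[\hat Y]/P_2(\hat{Y})$ with $\bar\pi\circ\phi=\mathrm{id}$, which is precisely the splitting.

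The first inclusion is immediate. Every $2\times 2$ subpermanent of $\hat{Y}$ is a $2\times 2$ subpermanent of $Y$, namely the one using rows and columns with indices $\ge 2$.

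For the second inclusion it suffices to look at the generators. Take a $2\times 2$ subpermanent of $Y$ with rows $\{a,b\}$ and columns $\{c,d\}$. If none of $a,b,c,d$ equals $1$, then $\pi$ fixes it and it lies in $P_2(\hat{Y})$. Otherwise some row or column index is $1$. By symmetry of $Y$, that row or column consists of the entries $y_{1j}$. So each of the two monomials $y_{ac}y_{bd}$ and $y_{ad}y_{bc}$ contains a factor from row $1$ or column $1$, i.e. a variable $y_{1j}$. Hence both monomials map to $0$ under $\pi$, and so does the subpermanent. Thus $\pi(P_2(Y))\subseteq P_2(\hat{Y})$.

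For the final assertion, take $g\in P_2(Y)\cap\kk[\hat{Y}]$. Then $g=\pi(g)\in\pi(P_2(Y))\subseteq P_2(\hat{Y})$. The reverse containment follows from the first inclusion.

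There is no real obstacle here. The only point needing care is the case analysis in the second inclusion: a row or column index equal to $1$ forces a $y_{1j}$ into every monomial of the permanent. This holds because the permanent is a sum over bijections, each of which picks exactly one entry from each row and each column.
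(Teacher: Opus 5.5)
Your proposal is correct and follows essentially the same route as the paper: both define the map killing the first-row variables $y_{1j}$, check on generators that each $2\times 2$ subpermanent either lies in $P_2(\hat{Y})$ or maps to $0$, and obtain a left inverse of $\phi$. Your version is slightly more careful than the paper's. You treat the case where the index $1$ occurs as a column index explicitly, via $y_{j1}=y_{1j}$, and you spell out how $P_2(Y)\cap\kk[\hat{Y}]=P_2(\hat{Y})$ follows, which the paper leaves implicit.
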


\begin{proof}
Define $\psi: \kk[Y] \to \kk[\hat{Y}]/P_2(\hat{Y})$ by 
\[\psi(y_{ij}) = \left\{ \begin{array}{ll}  
0  & \text{ if }  i = 1 \text{ and } \\ y_{ij} & \text{ if } i \neq 1. \end{array} \right.\]
Suppose $f = y_{ij}y_{lk} + y_{ik}y_{lj}$, where $i\leq j$ and $k\leq l$, is any generator of $P_2(Y)$. If $i \neq 1$ and $l \neq 1$, then $f$ is a generator of $P_2(\hat{Y})$. If $i = 1$ or $l = 1$, then $\psi(f) = 0$. This implies that $P_2(Y) \subseteq \ker \psi$. Therefore there exists a map $\bar{\psi} : \kk[Y]/P_2(Y) \to \kk[\hat{Y}]/P_2(\hat{Y})$ such that for any $\bar{f}$, we have $\bar{\psi}(\bar{f}) = \psi(f)$. It is straightforward to observe that $\bar{\psi} \circ \phi = id$. 
\end{proof}


We now begin with a lemma that helps us obtaining the lower bound for the $\vv$-number. Unlike in the generic symmetric case, the proof here is a bit more technical. So, we split the cases $n = 3$ and $n > 3$.
\begin{lemma}\label{lemma:n=3}
Assume that $n=3$. Then $    \alpha\left(\frac{P_2(Y)\colon \q_{n-1,n}(Y)}{P_2(Y)}\right) \geq 3$.
\end{lemma}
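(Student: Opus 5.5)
We will show the equivalent statement that every homogeneous $f$ of degree at most $2$ with $f\,\q_{23}(Y)\subseteq P_2(Y)$ already lies in $P_2(Y)$. For $n=3$ we have $\q_{23}(Y)=(y_{11},y_{12},y_{13},\,y_{22}y_{33}+y_{23}^2)$, so it suffices to use only $f\in\bigcap_{j=1}^3\bigl(P_2(Y)\colon y_{1j}\bigr)$. This is the analog of \Cref{lem:alpha-generic-3}, with the first row $y_{11},y_{12},y_{13}$ playing the role of the first column. The tools are \Cref{lem:Grobner-initial} together with the explicit reduced Gr\"obner basis of \Cref{thm:Grobner-symmetric} for $n=3$, and specializations of variables to zero, as in the generic case.

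\emph{Linear forms.} Let $f$ be a nonzero linear form with $y_{11}f\in P_2(Y)$. By \Cref{lem:Grobner-initial}, $\ini(f)\in\bigl(\ini(P_2(Y))_{\langle 2\rangle}\colon y_{11}\bigr)_{\langle1\rangle}$. By (1a),(1b),(1c) for $n=3$, $\ini(P_2(Y))_{\langle2\rangle}=(y_{11}y_{22},\,y_{11}y_{33},\,y_{22}y_{33},\,y_{11}y_{23},\,y_{13}y_{22},\,y_{12}y_{33})$, so $\ini(f)\in\{y_{22},y_{23},y_{33}\}$. Doing the same with $y_{12}$, the only available multiple is $y_{12}y_{33}$, which forces $\ini(f)=y_{33}$. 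Then $f=cy_{33}$, and $y_{13}y_{33}$ is not a leading term, a contradiction. Hence $f=0$.

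\emph{Quadratic forms.} Let $f$ be quadratic. First reduce $f$ modulo the quadratic part of the Gr\"obner basis, so that $f$ is a combination of the $15$ standard quadratic monomials, namely those not divisible by any of the six leading terms above. Then kill coefficients in stages, mimicking \cref{clm:y-z-same-row} and \cref{clm:y-z-same-column-and-antidiagonal}.

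\begin{enumerate}
\item Specialize all variables except the first-row entries $y_{11},y_{12},y_{13}$ to $0$. The image of $P_2(Y)$ in $\kk[y_{11},y_{12},y_{13}]$ is $(0)$, because every generator involves a variable outside the first row. So $y_{11}\bar f=0$, and all monomials of $f$ supported on $\{y_{11},y_{12},y_{13}\}$ vanish.
\item Specialize to the variables $y_{1j}$ together with one or two of the remaining entries, where the image of $P_2(Y)$ is principal or monomial. Compare $y_{1j}\bar f$ with that image, using \Cref{lem:retract}-type retractions (setting the first row to $0$ is a retraction) to control the monomials in $y_{22},y_{23},y_{33}$.
\item For the surviving monomials, use that for a standard $f$ the term $\ini(y_{1j}f)=y_{1j}\ini(f)$ must lie in the degree-$3$ part of $\ini(P_2(Y))$. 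For $n=3$ this part is generated by the listed quadratic leading terms together with the cubics from (2a)–(3c), such as $y_{13}y_{23}^2$, $y_{12}y_{13}^2$, $y_{12}^2y_{13}$, $y_{13}y_{23}y_{22}$ and $y_{12}y_{13}y_{22}$.
\end{enumerate}

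For step 3, the plan is to check, monomial by monomial among the standard quadratics not already killed, that it cannot be $\ini(f)$. This means finding some $j$ with $y_{1j}\cdot(\text{that monomial})\notin\ini(P_2(Y))$. Repeating this peels off the leading term each time and eventually forces $f=0$. If $f=0$ modulo $P_2(Y)$ in every case, the bound $\alpha\geq3$ follows.

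The expected main obstacle is step 3. Several standard monomials, for example $y_{12}y_{23}$, $y_{13}y_{23}$ and $y_{23}^2$, become initial after multiplying by some $y_{1j}$, so leading terms alone may not rule them out. For these I would first try a different $j$. If that fails, I would write $y_{1j}f$ explicitly as a combination of the degree-$3$ multiples of the permanents and compare coefficients after specializing to a $2\times2$ symmetric block. There the image of $P_2(Y)$ is the single prime $y_{ii}y_{kk}+y_{ik}^2$, which gives a clean divisibility argument.
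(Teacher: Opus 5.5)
\textbf{There is a genuine gap: the proposal stops where the real work begins, and the tool you offer for that step cannot work.} Your framework is the paper's: take the normal form of $f$ modulo the six quadratic permanents, then test whether $\ini(y_{1j}f)=y_{1j}\ini(f)$ lies in $\ini(P_2(Y))$.

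\textbf{The leading terms need correcting first.} $\ini(y_{12}y_{23}+y_{13}y_{22})=y_{12}y_{23}$, not $y_{13}y_{22}$. Otherwise the reduced Gr\"obner basis could not contain $y_{12}y_{13}y_{22}$ and $y_{13}y_{22}y_{23}$.

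\textbf{Three monomials survive the test.} With the correct leading terms, the test disposes of every standard quadratic monomial above $y_{13}y_{23}$. But $y_{13}y_{23}$, $y_{22}y_{23}$ and $y_{23}^2$ pass it for all three $j$. For $f=y_{23}^2$, even $y_{12}f$ and $y_{13}f$ lie in $P_2(Y)$. These cases require following the lower terms of $f$ through actual reductions, which is what the paper does:
\begin{enumerate}
\item In $y_{13}f$, the products $y_{13}^2y_{23}$, $y_{13}y_{22}y_{23}$, $y_{13}y_{23}^2$ are monomials of $P_2(Y)$ and the remaining products are standard. This kills the coefficients of $y_{13}y_{33},y_{22}^2,y_{23}y_{33},y_{33}^2$.
\item Reducing $y_{12}f$ by $y_{12}y_{23}+y_{13}y_{22}$ leaves the standard monomials $y_{13}^2y_{22}$ and $y_{13}y_{22}^2$, carrying the coefficients of $y_{13}y_{23}$ and $y_{22}y_{23}$.
\item Finally $y_{11}y_{23}^2\to -y_{12}y_{13}y_{23}\to y_{13}^2y_{22}$, a standard monomial, which kills the last coefficient.
\end{enumerate}

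\textbf{Why your fallback fails.} Specializing to a $2\times 2$ symmetric block cannot see any of this. Each monomial $y_{1j}y_{13}y_{23}$, $y_{1j}y_{22}y_{23}$, $y_{1j}y_{23}^2$ contains both $y_{23}$ and a first-row variable, and no such block retains both, so all are sent to $0$. The retraction in step 2 has the same defect: killing the first row kills the multiplier $y_{1j}$. More decisively, setting any single variable to $0$ sends $y_{11}y_{23}^2$ either to $0$ or into the image of $P_2(Y)$. For example, $y_{12}=0$ puts $y_{11}y_{23}$ in the image, and $y_{33}=0$ puts $y_{23}^2$ in it. So no specialization argument can prove $y_{11}y_{23}^2\notin P_2(Y)$; you need the full Gr\"obner basis.

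\textbf{Step 1 is false as stated.} In a symmetric matrix the first row is also the first column. So $y_{11}y_{22}+y_{12}^2$, $y_{11}y_{33}+y_{13}^2$ and $y_{11}y_{23}+y_{12}y_{13}$ specialize to $y_{12}^2$, $y_{13}^2$, $y_{12}y_{13}$. The image of $P_2(Y)$ is $(y_{12}^2,y_{12}y_{13},y_{13}^2)$, not $(0)$. You only get that the coefficients of $y_{11}^2,y_{11}y_{12},y_{11}y_{13}$ vanish. This part is repairable: the leading-term test with $y_{11}$ then handles $y_{12}^2$, $y_{12}y_{13}$ and $y_{13}^2$.

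\textbf{The linear case.} Your linear-form argument reaches the right conclusion, but its $y_{12}$ step uses the wrong leading term. It is simpler to note that no quadratic leading term is divisible by $y_{13}$. So $y_{13}f\in P_2(Y)$ alone forces a linear $f$ to be $0$.
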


\begin{proof}
    Equivalently, we want to show that no element in the module $\frac{P_2(Y)\colon \q_{n-1,n}(Y)}{P_2(Y)}=\frac{P_2(Y)\colon \q_{23}(Y)}{P_2(Y)}$ is of degree 2. Suppose that there exists a quadratic homogeneous polynomial $f\in P_2(Y)\colon \q_{n-1,n}(Y)$. It now suffices to show that $f\in P_2(Y)$. Set  \[
    f=\sum_{\substack{x,z\in X\\ x\succeq z}}a_{xz}xz.
    \]
    Observe that we can replace $f$ with $f-g$ for any $g\in P_2(Y)$. Therefore, we can assume that 
    \begin{equation}\label{eq:in-P2-symmetric-n=3}
        \text{$a_{xz}=0$ for any $x,z\in Y$  that form the diagonal of some $2\times 2$ submatrix of $Y$}
    \end{equation}
    On the other hand, by Lemma~\ref{lem:Grobner-initial} and Theorem~\ref{thm:Grobner-symmetric}, and with a remark that $P_2(Y)\colon \q_{23}(Y) = P_2(Y)\colon (y_{11},y_{12},y_{13})$, we have
    \begin{equation}\label{eq:initial-symmetric-n=3}
        \begin{multlined}
             \ini(f) \in \left( \ini( P_2(Y))_{\langle 3 \rangle} \colon (y_{11},y_{12},y_{13}) \right)_{\langle 2 \rangle} \\
             =(y_{11}y_{22}, y_{11}y_{23}, y_{11}y_{33}, y_{12}y_{23}, y_{12}y_{33}, y_{13}y_{23}, y_{22}y_{23}, y_{22}y_{33}, y_{23}^2).
        \end{multlined}
    \end{equation}
    In particular, since all the larger monomials form the diagonal of a $2\times 2$ submatrix of $Y$, we have $\ini(f)\preceq y_{13}y_{23}$ by (\ref{eq:in-P2-symmetric-n=3}). We can thus set
    \[
    f=a_1 y_{13}y_{23} + a_2y_{13}y_{33} +a_3 y_{22}^2 +a_4y_{22}y_{23} + a_5y_{23}^2+a_6y_{23}y_{33}+a_7y_{33}^2,
    \]
    where we remark that $y_{22}y_{33}$ does not appear as a summand due to (\ref{eq:in-P2-symmetric-n=3}).
        Recall from Theorem~\ref{thm:Grobner-symmetric} that a Gr\"obner basis of $P_2(Y)$ in this case is
    \begin{multline*}
        \mathcal{G}\coloneq \{y_{11}y_{22}+y_{12}^2, y_{11}y_{33}+y_{13}^2, y_{11}y_{23}+y_{12}y_{13}, y_{12}y_{33}+y_{13}y_{23}, y_{12}y_{23}+y_{13}y_{22},  y_{22}y_{33}+y_{23}^2, \\
         y_{12}y_{13}y_{22}, y_{12}y_{13}^2, y_{12}^2y_{13}, y_{13}y_{23}^2, y_{13}y_{22}y_{23}, y_{13}^2y_{23}, y_{13}^2y_{22}^2, y_{13}^3y_{22} \}.
    \end{multline*}
    Next we reduce $y_{13}f\in P_2(Y)$ with respect to $\mathcal{G}$:
    \begin{align*}
        y_{13}f
         &=\mathbf{a_1 y_{13}^2y_{23}} + a_2y_{13}^2y_{33} +a_3y_{13} y_{22}^2 +a_4y_{13}y_{22}y_{23} + a_5y_{13}y_{23}^2+a_6y_{13}y_{23}y_{33}+a_7y_{13}y_{33}^2\\
         &\xrightarrow{y_{13}^2y_{23}} \mathbf{a_2y_{13}^2y_{33}} +a_3y_{13} y_{22}^2 +a_4y_{13}y_{22}y_{23} + a_5y_{13}y_{23}^2+a_6y_{13}y_{23}y_{33}+a_7y_{13}y_{33}^2.
    \end{align*}
    If $a_2\neq 0$ or $a_3\neq 0$, then the initial term of the above polynomial is not reducible with respect to $\mathcal{G}$, and thus this forces $a_2=a_3=0$. With that, we continue the reduction:
    \begin{align*}
        & \ \ \ \ \mathbf{a_4y_{13}y_{22}y_{23}} + a_5y_{13}y_{23}^2+a_6y_{13}y_{23}y_{33}+a_7y_{13}y_{33}^2\\
        &\xrightarrow{y_{13}x_{22}y_{23}}  \mathbf{a_5y_{13}y_{23}^2}+a_6y_{13}y_{23}y_{33}+a_7y_{13}y_{33}^2\\
        &\xrightarrow{y_{13}y_{23}^2} \mathbf{a_6y_{13}y_{23}y_{33}}+a_7y_{13}y_{33}^2
    \end{align*}
    Since neither of the monomial summands above are in $\ini(P_2(Y))$, we must have $a_6=a_7=0$. We can thus rewrite $f$ as follows:
    \[
    f=a_1 y_{13}y_{23} + a_4y_{22}y_{23} + a_5y_{23}^2,
    \]
    Next we use the fact that $y_{12}f\in P_2(X)$, and perform reduction with respect to $\mathcal{G}$:
    \begin{align*}
        y_{12}f &= \mathbf{a_1 y_{12}y_{13}y_{23}} + a_4y_{12}y_{22}y_{23} +a_5y_{12}y_{23}^2\\
        &\xrightarrow{y_{12}y_{23}+y_{13}y_{22}} -a_1 y_{13}^2y_{22} + \mathbf{a_4y_{12}y_{22}y_{23} }+a_5y_{12}y_{23}^2\\
        &\xrightarrow{y_{12}y_{23}+y_{13}y_{22}} -a_1 y_{13}^2y_{22} + a_4y_{13}y_{22}^2 +\mathbf{a_5y_{12}y_{23}^2}\\
        &\xrightarrow{y_{12}y_{23}+y_{13}y_{22}} -\mathbf{a_1 y_{13}^2y_{22}} + a_4y_{13}y_{22}^2 -a_5y_{13}y_{22}y_{23}.
    \end{align*}
    With similar arguments, we deduce $a_1=a_4=0$. Thus $f=a_5y_{23}^2$. Finally, we perform reduction on $y_{11}f\in P_2(X)$ with respect to $\mathcal{G}$:
    \[
    y_{11}f = a_5y_{11}y_{23}^2 \xrightarrow{y_{11}y_{23}+y_{12}y_{13}} -a_5y_{12}y_{13}y_{23} \xrightarrow{y_{12}y_{23}+y_{13}y_{22}} a_5 y_{13}^2y_{22},
    \]
    which is not reducible with respect to $\mathcal{G}$ if $a_5\neq 0$. Thus we have $a_5=0$, i.e., $f=0$, a contradiction, as desired.
\end{proof}

\begin{lemma}\label{lem:alpha-symmetric-3}
    Assume that $n\geq 3$. Then for $1 \leq r < s \leq n$, $\alpha\left(\frac{P_2(Y)\colon \q_{rs}(Y)}{P_2(Y)}\right) \geq 3$.
\end{lemma}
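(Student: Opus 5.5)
By the symmetry of the generic symmetric matrix under simultaneous row and column permutations, it suffices to treat $(r,s)=(n-1,n)$. Such a permutation of $\{1,\dots,n\}$ induces an automorphism of $\kk[Y]$ preserving $P_2(Y)$ and sending $\q_{rs}(Y)$ to $\q_{n-1,n}(Y)$, so the module $(P_2(Y)\colon \q_{rs}(Y))/P_2(Y)$ is isomorphic, as a graded module, to the one for $\q_{n-1,n}(Y)$. The plan is then an induction on $n$, with base case $n=3$ given by \Cref{lemma:n=3}.

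For the inductive step, assume $n\geq 4$ and let $f$ be a quadratic form in $P_2(Y)\colon \q_{n-1,n}(Y)$; the goal is $f\in P_2(Y)$. Since $\q_{n-1,n}(Y)$ contains every variable $y_{1j}$, $1\leq j\leq n$, we have $y_{1j}f\in P_2(Y)$ for all $j$. In particular $\q_{n-1,n}(Y)\supseteq (y_{11},\dots,y_{1n})$, so by \Cref{lem:alpha-smaller-for-colon-of-small-ideals} it would even suffice to show that a quadratic $f$ with $y_{1j}f\in P_2(Y)$ for all $j$ lies in $P_2(Y)$. This mirrors \Cref{lem:alpha-generic-3}, with the first row playing the role of the first column.

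\textbf{Normalization.} Modulo $P_2(Y)$, I reduce $f$ to normal form with respect to the Gr\"obner basis of \Cref{thm:Grobner-symmetric}. Then no monomial of $f$ is a leading term of type (1a), (1b) or (1c). Write $f=f_0+f_1$, where $f_0\in\kk[\hat Y]$ and every monomial of $f_1$ involves some $y_{1j}$.

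\textbf{Killing $f_1$.} I plan to show $f_1=0$ by specialization arguments as in \Cref{clm:y-z-same-row,clm:y-z-same-column-and-antidiagonal}. Setting all but a handful of variables to zero reduces the problem to small cases. For example, keeping only the variables of a $3\times 3$ principal submatrix containing index $1$ and the indices appearing in the monomial reduces to the case $n=3$, where \Cref{lemma:n=3} forces the corresponding coefficients to vanish. One has to check that these specializations send $P_2(Y)$ into $P_2$ of the smaller symmetric matrix; setting to zero all variables with an index outside a set $S$ does exactly this, as in \Cref{lem:retract}. When the monomial $y_{1j}z$ involves four indices, the leading-term obstruction is handled as in the final step of \Cref{lem:alpha-generic-3}: some $y_{1k}\cdot y_{1j}z$ is not in $\ini(P_2(Y))$, and this can be checked against the list (2a)--(6b).

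\textbf{Handling $f_0$.} Once $f=f_0\in\kk[\hat Y]$, I use $y_{1j}f_0\in P_2(Y)$. Applying the retraction $\psi$ of \Cref{lem:retract} gives nothing directly, since $\psi(y_{1j})=0$. Instead I substitute to map $y_{1j}$ to variables of $\hat Y$, or argue via initial terms: $y_{1j}\ini(f_0)\in\ini(P_2(Y))$ for every $j$. Among the degree-three generators, those involving $y_{1j}$ exactly once together with a degree-two monomial of $\hat Y$ are (2b), (2c) and (3c), whose $\hat Y$-parts are of the form $y_{jl}y_{kk}$, $y_{jj}y_{jk}$ and so on. This constrains $\ini(f_0)$ to a short list, and the reduction computation of \Cref{lemma:n=3} (multiplying by $y_{1,n}$, then $y_{1,n-1}$, then $y_{11}$) can be run in general to kill the remaining coefficients. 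Alternatively, $f_0$ can be specialized to a $3\times 3$ principal block $\{1,a,b\}$, which yields the needed vanishing of each coefficient from the $n=3$ lemma applied to the block.

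\textbf{Main obstacle.} I expect the hardest step to be this last one: controlling the part $f_0$ supported in $\hat Y$, since the normal form modulo $P_2(Y)$ still contains many monomials there, for instance $y_{jk}y_{kl}$ and squares. I would first try the block-specialization reduction to \Cref{lemma:n=3}, checking that each surviving monomial of $f_0$ involves at most two indices besides $1$. Any monomial involving four distinct indices is already of type (1c) or (1b) and hence absent from the normal form, but the three-index monomials need checking.
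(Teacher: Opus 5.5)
There is a genuine gap, and it sits where you placed your main obstacle. The approach you outline cannot close it. The sufficient condition in your second paragraph is false for every $n\geq 4$. That condition says a quadratic $f$ with $y_{1j}f\in P_2(Y)$ for all $j$ must lie in $P_2(Y)$. Take $f=y_{24}y_{34}$:
\begin{itemize}
\item $y_{12}f$, $y_{13}f$ and $y_{1j}f$ for $j\geq 5$ lie in $P_2(Y)$, because $y_{12}y_{34}$, $y_{13}y_{24}$ and $y_{1j}y_{24}$ are of type (1c);
\item $y_{14}f=y_{14}y_{24}y_{34}$ is of type (2a), which your list of relevant degree-three generators omits;
\item $y_{11}f\equiv -y_{14}\cdot y_{12}y_{34}\equiv 0$ modulo $P_2(Y)$, using $y_{11}y_{24}+y_{12}y_{14}\in P_2(Y)$.
\end{itemize}
Yet $f\notin P_2(Y)$. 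The only degree-two Gr\"obner basis element in which $y_{24}y_{34}$ occurs is $y_{23}y_{44}+y_{24}y_{34}$, whose leading term is $y_{23}y_{44}$, so $y_{24}y_{34}$ is a standard monomial. Hence this monomial survives your normalization and lies in $\kk[\hat Y]$. It involves three indices other than $1$, so every specialization to a principal block $\{1,a,b\}$ kills it, and it is annihilated by every $y_{1j}$ modulo $P_2(Y)$. No argument using only the multipliers $y_{1j}$ can force its coefficient in $f_0$ to vanish. That rules out initial-term constraints, reruns of the reductions in \Cref{lemma:n=3} with $y_{1n},y_{1,n-1},y_{11}$, and block specializations.

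The missing idea is to use the other generators of $\q_{n-1,n}(Y)$, namely the $y_{ij}$ with $2\leq i\leq n-2$ and $i\leq j$. For instance, when $n=4$ one has $y_{23}\cdot y_{24}y_{34}\notin P_2(Y)$. By \Cref{lem:retract}, after relabeling, this is the $3\times 3$ reduction $y_{12}y_{13}y_{23}\to -y_{13}^2y_{22}$ from the proof of \Cref{lemma:n=3}, and it is what actually excludes this $f$. This is exactly where the paper uses the induction hypothesis, which your plan sets up but never invokes. The paper first shows that $\ini(f)$ involves no first-row variable, by a case analysis plus a reduction to \Cref{lemma:n=3} on the block $\{1,j,k\}$, comparable to your step for $f_1$. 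Hence $f\in\kk[\hat Y]$. It then applies \Cref{lem:retract} to the multipliers from $\hat Y$: $y_{ij}f\in P_2(Y)\cap\kk[\hat Y]=P_2(\hat Y)$. So $f\in P_2(\hat Y)\colon\q_{n-2,n-1}(\hat Y)$, and induction gives $f\in P_2(\hat Y)\subseteq P_2(Y)$. You are right that the retraction gives nothing when applied to $y_{1j}f$, since $\psi(y_{1j})=0$. It must instead be applied to the non-first-row generators of $\q_{n-1,n}(Y)$.
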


\begin{proof}
Due to the symmetry of arguments, we may assume, without loss of generality, that $r = n-1$ and $s = n$. We will prove this result by induction on $n$. If $n=3$, this follows from Lemma~\ref{lemma:n=3}. Now assume that $n\geq 4$ and that 
    \[
    \alpha\left(\frac{P_2(Z)\colon \q_{n-2,n-1}(Z)}{P_2(Z)}\right) \geq 3
    \]
for any generic symmetric matrix $Z$ of size $(n-1)\times (n-1)$.
Assume to the contrary that there exists a homogeneous polynomial $f\in (P_2(Y)\colon \q_{n-1,n}(Y)) \setminus P_2(Y)$ such that $\deg f= 2$. 
Set
    \[
    f=\sum_{x,z\in Y, x\succeq z} a_{xz}xz.
    \]
Since we can replace $f$ with $f-g$ for some $g\in P_2(Y)$, by Theorem~\ref{thm:Grobner-symmetric}, we may assume that
\begin{equation}\label{eq:not-in-P2}
    a_{xz}=0 \quad \text{if $x,z$ form the diagonal of a submatrix of $Y$,}
\end{equation}
and 
\begin{equation}\label{eq:not-in-P2-4-distinct}
     a_{xz}=0 \quad \text{if the row and column indices of $x$ and $z$ form a set of four distinct integers}. 
\end{equation}

Set $\ini(f)=a_{x_0z_0}x_0z_0$. We have the following claim.
\begin{claim}\label{clm:y-z-first-row}
    Neither $x_0$ nor $z_0$ is in the first row of $Y$.
\end{claim}

\begin{proof}[Proof of Claim~\ref{clm:y-z-first-row}]
    Suppose to the contrary that $x_0$ or $z_0$ is in the first row of $Y$. Since $x_0\succeq z_0$, both scenarios imply that $x_0$ is in the first row of $Y$. In fact, we will show that $x_{0}=y_{1k}$ and $z_0=y_{jk}$ for some $1<j<k$ with $k\in \{n-1,n\}$. We first eliminate all other cases. 
\begin{itemize}
    \item \textbf{Case 1:} Asssume that $x_0=y_{11}$. Due to (\ref{eq:not-in-P2}), $z_0$ must be in the first row. We thus have two cases.
    \begin{itemize}
        \item \textbf{Case 1.1:} $z_0=y_{11}$. Then $\ini(y_{11}f)=y_{11}^3\notin \ini(P_2(Y))$ by Theorem~\ref{thm:Grobner-symmetric}, a contradiction.
        \item \textbf{Case 1.2:} $z_0=y_{1i}$ for some $i>1$. Then $\ini(y_{11}f)=y_{11}^2y_{1i}\notin \ini(P_2(Y))$  by Theorem~\ref{thm:Grobner-symmetric}, a contradiction.
    \end{itemize}
    \item \textbf{Case 2:} Assume that $x_0=y_{1k}$ for some $k>1$. We have the following cases.
    \begin{itemize}
        \item \textbf{Case 2.1:} $z_0=y_{jj}$. Since $x_0\succeq z_0$, we have $j>1$. Due to (\ref{eq:not-in-P2}), we have $j\leq k$. Thus $\ini(y_{1i}h)=y_{1i}^2y_{jj}\notin \ini(P_2(Y))$  by Theorem~\ref{thm:Grobner-symmetric}, a contradiction.
        \item \textbf{Case 2.2:} $z_0=y_{1j}$ for some $j>1$. Then $\ini(y_{11}f)=y_{11}y_{1k}y_{1j}\notin \ini(P_2(Y))$  by Theorem~\ref{thm:Grobner-symmetric}, a contradiction.
        \item \textbf{Case 2.3:} $z_0=y_{ji}$ for some $1<j<i\leq n-2$. Due to (\ref{eq:not-in-P2-4-distinct}), we have either $j=k$ or $i=k$, only the latter of which is possible due to (\ref{eq:not-in-P2}).
        Then $\ini(y_{kk}f) = y_{1k}y_{jk}y_{kk} \notin \ini(P_2(Y))$  by Theorem~\ref{thm:Grobner-symmetric}, a contradiction.

    \end{itemize}
\end{itemize}

Therefore we can indeed assume that $f\in P_2(Y)\colon \q_{n-1,n}(Y)$ and $\ini(f) = y_{1k}y_{jk}$ for some $1<j<k$ and $k\in \{n-1,n\}$. We then have $f\q_{n-1,n}(Y)\subseteq  P_2(Y)$. By specializing all variables to 0, except those in the 1st, $j$-th, and $k$-th columns and rows, we obtain
\[
(y_{11},y_{1j},y_{1k}) f \subseteq P_2(Y'), \text{ or equivalently, } f\in P_2(Y')\colon (y_{11},y_{1j},y_{1k})
\]
where 
\[
Y'=\begin{pmatrix}
    y_{11} & y_{1j} & y_{1k}\\
    y_{1j} & y_{jj} & y_{jk}\\
    y_{1k} & y_{jk} & y_{kk}
\end{pmatrix}.
\]
Note that under the new notation, we have $(y_{11},y_{1j},y_{1k}, y_{jj}y_{kk}+y_{jk}^2)=\q_{23}(Y')$. Thus we have
\[
f\in P_2(Y')\colon (y_{11},y_{1j},y_{1k}) = P_2(Y')\colon \q_{23}(Y')
\]
with $\deg f=2$. By Lemma~\ref{lemma:n=3}, (\ref{eq:not-in-P2}), and (\ref{eq:not-in-P2-4-distinct}), we must have $f=0$, a contradiction, as~desired.
\end{proof}
By Claim~\ref{clm:y-z-first-row}, all monomial summands of $f$ are in $\mathbb{K}[\hat{Y}]$. Thus for any $2\leq i\leq j\leq n$ with $i\leq n-2$, by Lemma~\ref{lem:retract}, $y_{ij}f \in P_2(Y) \cap \mathbb{K}[\hat{Y}]=P_2(\hat{Y})$. Therefore we have
\[
f\in P_2(\hat{Y})\colon \q_{n-2,n-1}(\hat{Y}).
\]
By the induction hypothesis, (\ref{eq:not-in-P2}) and (\ref{eq:not-in-P2-4-distinct}), we must have $f=0$, a contradiction, as desired.
\end{proof}

We are finally ready to prove the main theorem.
\begin{theorem}
    Assume that $n\geq 3$. Then $\vv(P_2(Y))=3$.
\end{theorem}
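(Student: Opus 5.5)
The proof splits into the two inequalities, following the pattern of \Cref{thm:main-v-number-generic}.

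For the lower bound, \cite[Theorem 4.1]{Chau} says the minimal primes of $P_2(Y)$ are exactly the ideals $\q_{rs}(Y)$ with $1\leq r<s\leq n$. By \Cref{lem:lower-bound-v-number}, $\vv(P_2(Y))$ is therefore at least the minimum of $\alpha\left(\frac{P_2(Y)\colon \q_{rs}(Y)}{P_2(Y)}\right)$ over these pairs. \Cref{lem:alpha-symmetric-3} shows each of these numbers is at least $3$, so $\vv(P_2(Y))\geq 3$.

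For the upper bound I want a cubic monomial $g$ and an associated prime $\p$ with $P_2(Y)\colon g=\p$; \Cref{lem:upper-bound-v-number} then gives $\vv(P_2(Y))\leq 3$. I will verify this with \Cref{lem:colon-equality}, taking $n=1$ there, $Q=\p$ prime, and $x=g$. Since $\p$ is prime, the hypotheses reduce to three checks: $g\notin\p$, $P_2(Y)\subseteq \p$, and $\p g\subseteq P_2(Y)$.

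The natural candidate is $\p=\q_{n-1,n}(Y)$, which is generated by $y_{n-1,n-1}y_{nn}+y_{n-1,n}^2$ together with all variables outside the $2\times 2$ block on rows and columns $\{n-1,n\}$. Membership $P_2(Y)\subseteq\p$ is automatic because $\p$ is a minimal prime. For $g$ I will take a product of three block variables, chosen so that multiplying it by any variable $y_{ij}$ outside the block lands in $P_2(Y)$. Two options to test:
\[
g=y_{n-1,n-1}\,y_{n-1,n}\,y_{nn} \quad\text{or}\quad g=y_{n-1,n}^3 .
\]
Note that the generator $y_{n-1,n-1}y_{nn}+y_{n-1,n}^2$ of $\p$ is itself already in $P_2(Y)$, so only the variables outside the block need checking.

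To check $y_{ij}g\in P_2(Y)$ for $y_{ij}$ outside the block, I expect to use the subpermanent relations (1b), such as $y_{kk}y_{n-1,n}+y_{k,n-1}y_{kn}$, to move a factor of $g$ onto variables in row $k$. Repeating this should produce the cubic or higher-degree Gröbner basis elements (2a)--(3c) and (6a)--(6b) of \Cref{thm:Grobner-symmetric}. These in fact suggest a cleaner candidate: the element (6b), $y_{ik}^2y_{jj}^2$, resembles the generic case, and a row-type choice like $g=y_{11}y_{12}y_{13}$ with the prime generated by the remaining entries may be better. Concretely, I will first try $g=y_{nn}^3$ or $g=y_{n-1,n-1}y_{nn}y_{n-1,n}$ and check with the reduced Gröbner basis that $y_{kl}\,g$ reduces to $0$ for every $y_{kl}$ outside the block. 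It is enough to do this for $n=3$; for larger $n$ the four-distinct-index products (1c) and \Cref{lem:retract}-type specializations handle the remaining variables. Finally, $g\notin\p$ holds because $g$ is a monomial in the block variables, none of which lie in $\p$.

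The main obstacle is choosing $g$ correctly: verifying $\p g\subseteq P_2(Y)$ needs explicit syzygy-type computations, and for the rank-one-like prime a naive cubic may fail. If no block cubic works, I will fall back to a degree-$3$ polynomial (not a monomial) in the block variables that kills all of $(y_{1j},\dots)$ modulo $P_2(Y)$, found by solving the linear conditions in the $n=3$ case.
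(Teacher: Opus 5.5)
Your lower bound is the paper's argument. For the upper bound the paper does not use a minimal prime: it quotes \cite[Proposition~5.3]{Chau}, which says $P_2(Y)\colon y_{ij}y_{kk}^2$ is an associated prime. That prime is embedded; for example, for $n=3$ one gets $P_2(Y)\colon y_{13}y_{22}^2=(y_{11},y_{12},y_{13},y_{23},y_{33})$.

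Your plan is to realize the minimal prime $\q_{n-1,n}(Y)$ itself as $P_2(Y)\colon g$ via \Cref{lem:colon-equality}. This is a genuinely different route, and it works. As written, however, the proposal never carries out its only nontrivial step, the inclusion $\q_{n-1,n}(Y)\,g\subseteq P_2(Y)$, and some of your candidates fail it.

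In particular, $g=y_{nn}^3$, which you say you would try first, cannot work. If $P_2(Y)\colon g=\q_{n-1,n}(Y)$, then $g$ lies in every other minimal prime, since $g\,\q_{n-1,n}(Y)\subseteq P_2(Y)\subseteq \q_{rs}(Y)$ while $\q_{n-1,n}(Y)\not\subseteq \q_{rs}(Y)$. Yet $y_{nn}^3\notin \q_{1n}(Y)$. The row-type fallback fails too: the ideal generated by the variables other than $y_{11},y_{12},y_{13}$ does not contain $y_{11}y_{22}+y_{12}^2$. So, unlike the generic case, it is not even a prime over $P_2(Y)$.

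Here is the missing check for $g=y_{aa}y_{ab}y_{bb}$ with $a=n-1$, $b=n$. All index conditions in \Cref{thm:Grobner-symmetric} hold because $i<a<b$ for every $i\notin\{a,b\}$. All congruences below are modulo $P_2(Y)$.

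If $i<j$ with $i,j\notin\{a,b\}$, then $y_{ij}y_{ab}$ is of type (1c).

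The subpermanent $y_{ii}y_{ab}+y_{ia}y_{ib}$ gives $y_{ii}g\equiv -y_{ia}y_{ib}y_{aa}\cdot y_{bb}$, and $y_{ia}y_{ib}y_{aa}$ is of type (2c).

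The subpermanent $y_{bb}y_{ia}+y_{ib}y_{ab}$ gives $y_{ia}g\equiv -y_{ib}y_{ab}y_{aa}\cdot y_{ab}$, and $y_{ib}y_{ab}y_{aa}$ is of type (3c).

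Finally, $y_{ib}g$ is a multiple of that same (3c) element.

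Your own argument already shows $g\notin\q_{n-1,n}(Y)$. Hence \Cref{lem:colon-equality} with exponent $1$ gives $P_2(Y)\colon g=\q_{n-1,n}(Y)$. An analogous check, using (2a) as well, shows that $g=y_{ab}^3$ also works.

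With this filled in, your upper bound is self-contained: it needs only the Gr\"obner basis, not the associated-prime computation of \cite[Proposition~5.3]{Chau}. By symmetry it also shows that the bound in \Cref{lem:alpha-symmetric-3} is attained at every $\q_{rs}(Y)$, so the v-number is realized at a minimal prime. The paper's route is a one-line citation, but it depends on that external result.
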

\begin{proof}
Since $\q_{rs}(Y)$, where $1\leq r<s\leq n$, are exactly the minimal primes of $P_2(Y)$ by \cite[Theorem 4.1]{Chau}, from \Cref{lem:alpha-symmetric-3} and \Cref{lem:lower-bound-v-number}, it follows that $\vv(P_2(Y)) \geq 3$. On the other hand, by \cite[Proposition 5.3]{Chau}, $P_2(Y) \colon y_{ij}y_{kk}^2$ is an associated prime of $P_2(Y)$. Therefore, $\vv(P_2(Y)) \leq 3$.
\end{proof}
\section{v-number of permanental ideals of generic hankel matrices}

Throughout this section, let $H$ be an $m\times n$ generic Hankel matrix with $2\leq m\leq n$. In this section, we compute the ${\rm v}$-number of the permanental ideal $P_2(H)$. We write
\begin{equation}\label{equ:Hankel}
    H = \begin{bmatrix} x_1 & x_2 & x_3 & \cdots & x_{n-1} & x_n \\
x_2 & x_3 & x_4 & \cdots & x_n & x_{n+1} \\
\vdots & \vdots & \vdots & \ddots & \vdots & \vdots \\
x_m & x_{m+1} & x_{m+2} & \cdots & x_{m+n-2} & x_{m+n-1}
\end{bmatrix}.
\end{equation}

Consider the lexicographical ordering with respect to the monomial ordering $x_1>x_2>\cdots > x_{m+n-1}$.  A Gr\"obner basis of $P_2(H)$ with respect to this ordering was given in \cite{GGS07}, although it is noteworthy that the result is heavily dependent on the value of $(m,n)$. Below we will employ this Gr\"obner basis at times, with an implicit note that we are using the lex ordering with $x_1>x_2>\cdots > x_{m+n-1}$.

First we handle the cases where $\vv(P_2(H))=1$.

\begin{theorem}\label{thm:Hankel-v-number-1}
    Let $H$ be a generic Hankel matrix as in (\ref{equ:Hankel}). Assume that one of the following holds:
    \begin{enumerate}
        \item $m \geq 3$ and $m+n-1 \geq 9$;
        \item $(m,n)=(3,6)$;
        \item $(m,n)=(4,5)$.
    \end{enumerate}
    Then $\vv(P_2(H)) = 1$.
\end{theorem}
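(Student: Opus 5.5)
The lower bound is automatic. Since $\operatorname{char}\kk\neq 2$ and $(m,n)\neq(2,2)$, the ideal $P_2(H)$ is not prime: for instance, the results of \cite{GGS07} show that it has several associated primes. Hence no associated prime equals $P_2(H)\colon 1$, and so $\vv(P_2(H))\geq 1$. What remains is to produce a single variable $x_k$ for which $P_2(H)\colon x_k$ is a prime ideal, and then to apply \Cref{lem:upper-bound-v-number}.

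The natural candidate is the prime generated by all variables except one. For large Hankel matrices, \cite{GGS07} describes the associated primes of $P_2(H)$, and among them, in the range $m\geq 3$, $m+n-1\geq 9$, there should be primes of the form
\[
\p_k=(x_1,\ldots,\widehat{x_k},\ldots,x_{m+n-1}).
\]
Such a prime is $P_2(H)$-associated precisely when $x_k^N\notin P_2(H)$ while every other variable times a suitable power of $x_k$ lies in $P_2(H)$. I will take $k$ to be an interior index; the natural first choice is the middle variable, or any $x_k$ whose square is not itself a permanent, i.e.\ $x_k$ does not appear on the "diagonal" of any $2\times 2$ submatrix with a repeated entry. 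I then claim that $P_2(H)\colon x_k=\p_k$ and verify this via \Cref{lem:colon-equality} with exponent $1$. The hypotheses to check are the following.
\begin{enumerate}
\item $\p_k$ is prime, which is clear.
\item $x_k\notin\p_k$, which is also clear.
\item $P_2(H)\subseteq\p_k$. Every subpermanent $x_ax_d+x_bx_c$ with $a+d=b+c$, $a<b\leq c<d$, has at most one monomial that is a pure power of $x_k$, namely $x_k^2$ when $b=c=k$. So I need $x_k$ never to occur as a repeated entry of an antidiagonal $2\times 2$ minor; equivalently, the monomial $x_k^2$ never appears. This is false for interior entries of the matrix, so I must instead choose $k$ to be a corner index such as $k=1$ or $k=m+n-1$, whose square never occurs in a $2\times2$ permanent.
\item $\p_k\subseteq P_2(H)\colon x_k$, i.e.\ $x_kx_j\in P_2(H)$ for all $j\neq k$.
\end{enumerate}

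Step (4) is the heart of the proof and the main obstacle. With $k=1$, I must show $x_1x_j\in P_2(H)$ for all $j\geq 2$, which amounts to showing that the quadratic monomials $x_1x_j$ lie in the degree-two part of $P_2(H)$. I would do this by explicit linear algebra on the permanents of degree $2$ with fixed index sum $s=1+j$. For each $s$, the permanents $x_ax_d+x_bx_c$ with $a+d=b+c=s$ span a subspace of the space spanned by the monomials $x_ax_{s-a}$. When there are enough $2\times 2$ submatrices realizing the pairs $\{a,s-a\}$ — that is, when the "pair graph" on the monomials of weight $s$ contains an odd cycle — the combinations with $+$ signs generate every monomial, because $\operatorname{char}\kk\neq 2$ lets us invert $2$. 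The size hypothesis ($m\geq3$ with $m+n\geq 10$, or the two sporadic cases) is exactly what should guarantee such odd cycles for every weight $s=1+j$, and also for the extreme weights, where fewer submatrices are available. If $x_1$ fails for some small weight, I would fall back to the symmetric endpoint $x_{m+n-1}$, or to another index in the sporadic cases, confirming the choice with the Gröbner basis of \cite{GGS07}, whose degree-$2$ part should consist of monomials. In that case $x_1x_j\in\ini(P_2(H))$ directly, and a Macaulay-style verification for $(3,6)$ and $(4,5)$ completes the argument.
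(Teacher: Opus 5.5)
Your lower bound is fine and matches the paper. The upper-bound strategy, however, fails for every choice of $k$: there is no $k$ with $P_2(H)\colon x_k=\p_k$.

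As you note, for $2\le k\le m+n-2$ the permanent $x_{k-1}x_{k+1}+x_k^2$ (rows $i,i+1$, columns $c,c+1$ with $i+c=k$) is not in $\p_k$. So $P_2(H)\not\subseteq\p_k$ for every interior $k$.

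The corners fail at your step (4). Give $x_v$ weight $v$. Every $2\times 2$ subpermanent of $H$ is weight-homogeneous of weight at least $4$, and $P_2(H)$ is generated in degree $2$. Hence the degree-$2$ part of $P_2(H)$ has no weight-$3$ component, and its weight-$4$ component is spanned by $x_1x_3+x_2^2$ alone. Thus $x_1x_2\notin P_2(H)$ and $x_1x_3\notin P_2(H)$, so $\p_1\not\subseteq P_2(H)\colon x_1$. By symmetry the same happens for $k=m+n-1$. These low weights are exactly where your pair graph has no odd cycle, and no size hypothesis changes that.

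Your fallback also fails. The degree-$2$ part of the Gr\"obner basis in \cite{GGS07} contains the binomials $x_ix_{i+s+t}+x_{i+s}x_{i+t}$, not only monomials. And $x_1x_j\in\ini(P_2(H))$ would not imply $x_1x_j\in P_2(H)$ in any case.

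The missing idea is that the witness should lie in its own annihilator prime. The paper takes $f=x_5$ throughout. For $(m,n)=(3,6)$ it checks $x_jx_5\in P_2(H)$ for all $j\le 7$ directly. Here $x_4x_5$, $x_5^2$, $x_5x_6$ are Gr\"obner basis elements. The remaining products come from exactly your odd-cycle trick with $2$ inverted, e.g.\ $2x_1x_5=(x_1x_5+x_3^2)+(x_1x_5+x_2x_4)-(x_2x_4+x_3^2)$. So the trick is right; you applied it to the wrong monomials.

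In case (1) the paper quotes the proof of \cite[Proposition~5.1]{GGS07}. This gives $P_2(H)\colon x_5=(x_1,\ldots,x_{m+n-1})$, the homogeneous maximal ideal, so $x_5$ is a linear socle element. Equality holds simply because $x_5\notin P_2(H)$ for degree reasons.

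For $(3,6)$ the colon is $(x_1,\ldots,x_7)$. The omitted variable is $x_8$, not the variable one colons by. One must also exclude $x_5x_8^k\in P_2(H)$. The paper does this by Gr\"obner reduction: $x_5x_8^k\to -x_6x_7x_8^{k-1}$, which is irreducible for $k=1$, and for $k\ge 2$ further reduces to $x_7^2x_8^{k-2}$, which is irreducible.

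Since $x_5$ lies in the prime, \Cref{lem:colon-equality} (which requires $x\notin P$) is not the right tool; equality is argued directly. Finally, $(4,5)$ reduces to $(3,6)$ because the two permanental ideals coincide.
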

\begin{proof}
    It is known that $P_2(H)$ is not a prime ideal as long as $(m,n)\neq (2,2)$ (see, e.g., \cite[Theorem~4.3]{GGS07}). In particular, under our assumption, $P_2(H)$ is not prime, and thus $\vv(P_2(H))\geq 1$. It now suffices to prove that $\vv(P_2(H))\leq 1$, which we will do case-by-case.
    \begin{enumerate}
        \item Assume that $m \geq 3$ and $m+n-1 \geq 9$. It follows from the proof of \cite[Proposition~5.1 (2)]{GGS07}  that $P_2(H) \colon  x_5 = (x_1,\ldots,x_{m+n-1})$. Hence $\vv(P_2(H)) \leq 1$ by \cref{lem:upper-bound-v-number}, as desired.
        \item Assume that $(m,n)=(3,6)$. We show that $P_2(H)\colon x_5=(x_1,x_2,\dots, x_7)$. For ease of notation, set
        \[
        H=(h_{ij})=\begin{pmatrix}
            x_1 &x_2 &x_3&x_4&x_5&x_6\\
            x_2 &x_3&x_4&x_5&x_6&x_7\\
            x_3&x_4&x_5&x_6&x_7&x_8
        \end{pmatrix}.
        \]
        We have 
        \begin{align*}
            2x_1x_5 &= (x_1x_5+x_3^2)+(x_1x_5+x_2x_4)-(x_2x_4+x_3^2)\\
            &= (h_{11}h_{33}+h_{13}h_{31})+(h_{11}h_{24}+h_{14}h_{21})-(h_{12}h_{23}+h_{13}h_{22})\\
            &\in P_2(H),\\
            2x_2x_5&=(x_2x_5+x_3x_4)+(x_1x_6+x_2x_5)-(x_1x_6+x_3x_4)\\
            &=(h_{12}h_{24}+h_{14}h_{22})+(h_{11}h_{25}+h_{15}h_{21})-(h_{11}h_{34}+h_{14}h_{31})\\
            &\in P_2(H), \\
            2x_3x_5 & = (x_3x_5+x_4^2) + (x_2x_6+x_3x_5) - (x_2x_6+x_4^2) \\
            & = (h_{13}h_{24} + h_{14}h_{23}) + (h_{12}h_{25}+h_{15}h_{22}) - (h_{12}h_{34}+h_{14}h_{32}) \\
            & \in P_2(H),\\
            x_4x_5,x_5^2,x_5x_6&\in P_2(H) \quad \text{due to \cite[Theorem 2.10]{GGS07}},\\
            2x_5x_7&=(x_4x_8+x_5x_7)+(x_5x_7+x_6^2)-(x_4x_8+x_6^2)\\
            &=(h_{23}h_{36}+h_{26}h_{33})+(h_{15}h_{26}+h_{16}h_{25})-(h_{14}h_{36}+h_{16}h_{34})\\
            &\in P_2(H).
        \end{align*}
        Hence $(P_2(H)\colon x_5)\supseteq (x_1,x_2,\dots, x_7)$. Suppose for the sake of contradiction that $(P_2(H)\colon x_5)\supsetneq (x_1,x_2,\dots, x_7)$.     
        Since $x_5\notin P_2(H)$, the colon ideal $P_2(H)\colon x_5$ is a proper ideal, and hence primary to the maximal ideal. Thus $P_2(H)\colon x_5$ contains a power of $x_8$, which we assume to be $x_8^k$ for some $k\geq 1$. We then have $x_5x_8^k\in P_2(H)$. Recall that the following set of polynomials is a Gr\"obner basis of $P_2(H)$ (\cite[Theorem~2.10]{GGS07}):
        \begin{align*}
            \{x_ix_{i+s+t}+x_{i+s}x_{i+t}\mid i\in [1,6], s\in [1,2], t\in [1,5] \text{ with } i+s+t\in [3,8] \}\\
            \cup \{ x_ix_{i+1}, x_i^2, x_6^2\mid i\in [3,5]  \} \cup \{x_2^2x_3, x_6x_7^2, x_2^4,x_7^4\}.
        \end{align*}
        We next reduce $x_5x_8^k$ with respect to this Gr\"obner basis:
        \[
        x_5x_8^k\xrightarrow{x_5x_8+x_6x_7} -x_6x_7x_8^{k-1}.
        \]
        If $k=1$, then the polynomial above, $-x_6x_7$, is not reducible with respect to the above Gr\"obner basis, a contradiction, as desired. Now assume that $k\geq 2$. We then have
        \[
        -x_6x_7x_8^{k-1}\xrightarrow{x_6x_8+x_7^2} x_7^2x_8^{k-2},
        \]
        which is not reducible with respect to the above Gr\"obner basis, a contradiction, as desired.
        \item Assume that $(m,n)=(4,5)$. The permanental ideal in this case can be verified with Macaulay2 to be equal to that in the previous case ($(m,n)=(3,6)$) over the ring of integers $\mathbb{Z}$. Therefore this is the same ideal as the one treated in (2) over any field $\kk$. The result then follows.\qedhere
    \end{enumerate}
\end{proof}

Next is the cases where $\vv(P_2(H))=2$. This class in particular includes $P_2(H)$ where $H$ is a $2\times n$ Hankel matrix, where $n\geq 4$.

\begin{theorem}\label{thm:Hankel-v-number-2}
    Let $H$ be a generic Hankel matrix as in (\ref{equ:Hankel}). Assume that one of the following holds:
    \begin{enumerate}
        \item $m=2$ and $n\geq 4$;
        \item $(m,n)=(3,4)$;
        \item $(m,n)=(3,5)$;
        \item $(m,n)=(4,4)$.
    \end{enumerate}
    Then $\vv(P_2(H))= 2$.
\end{theorem}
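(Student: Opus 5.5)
We prove the two inequalities $\vv(P_2(H))\le 2$ and $\vv(P_2(H))\ge 2$ separately.

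\emph{Upper bound.} In each case we exhibit a quadratic monomial $g$ with $P_2(H)\colon g$ prime, and then apply \Cref{lem:upper-bound-v-number}. Our first candidate is the maximal ideal $\mathfrak m=(x_1,\dots,x_{m+n-1})$; it is an associated prime in these cases, since the primary decompositions in \cite{GGS07} contain an $\mathfrak m$-primary component. For $g$ we try a product of two middle variables, e.g.\ $g=x_3x_4$ when $m=2$, or $g=x_4^2$, $x_4x_5$ for $(m,n)=(3,4),(3,5),(4,4)$.

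\begin{enumerate}
\item First we check $g\notin P_2(H)$ using the lex Gr\"obner basis of \cite[Theorem~2.10]{GGS07} in the relevant case.
\item Next we show $x_ig\in P_2(H)$ for every $i$. As in the proof of \Cref{thm:Hankel-v-number-1}(2), we write $2x_ig$ as a signed sum of $2\times 2$ subpermanents times variables. Alternatively, we observe that $x_ig$ is divisible by a Gr\"obner basis monomial such as $x_i^2x_{i+1}$ or $x_ix_{i+1}$, or reduces to zero.
\item Then $P_2(H)\colon g\supseteq\mathfrak m$. Since the colon is proper, it equals $\mathfrak m$.
\end{enumerate}

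Alternatively, in the $m=2$ case we could use a nonmaximal minimal prime, e.g.\ $(x_3,\dots,x_{n-1},\ x_1x_{n+1}+\cdots)$-type primes from \cite[Theorem~4.3]{GGS07}. We would then use \Cref{lem:colon-equality} with $g$ a product of two variables outside that prime, exactly as in the $2\times n$ generic case.

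\emph{Lower bound.} Since $P_2(H)$ is not prime, $\vv\ge 1$. We must rule out degree~1, i.e.\ show that no linear form $f\notin P_2(H)$ satisfies $P_2(H)\colon f=\p$ prime. By \Cref{lem:lower-bound-v-number} it suffices to show that $(P_2(H)\colon \p)/P_2(H)$ has no linear elements for each minimal prime $\p$.

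\begin{enumerate}
\item Using \cite[Theorem~4.3]{GGS07}, every minimal prime contains some variable $x_k$. Hence it suffices to show that $(P_2(H)\colon x_k)$ contains no linear form, since $P_2(H)$ has no linear forms.
\item Suppose $f$ is a linear form with $x_kf\in P_2(H)$. By \Cref{lem:Grobner-initial}, $\ini(f)\in(\ini(P_2(H))_{\langle 2\rangle}\colon x_k)_{\langle1\rangle}$. This restricts $\ini(f)$ to variables $x_j$ with $x_jx_k$ a leading term of a quadric in the Gr\"obner basis.
\item We then eliminate coefficients by specializations, as in \Cref{lem:alpha-generic-3}. Writing $x_kf=\sum c\cdot(\text{permanents})$ as a linear combination of the quadric generators and comparing monomials, we conclude $f=0$.
\end{enumerate}

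The contrast with \Cref{thm:Hankel-v-number-1} is that for these small sizes no quadratic monomial $x_ix_j$ lies in $P_2(H)$ in a way that makes $x_k$ annihilate all variables. Concretely, we verify that for every $k$ the degree-2 part of $P_2(H)$ is too small for $x_k\mathfrak m_1\subseteq P_2(H)$ to hold, and more precisely that $x_kf\in P_2(H)_2$ forces $f=0$.

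\emph{Main obstacle.} The main difficulty is this lower bound. It requires case-by-case use of the explicit Gr\"obner bases from \cite{GGS07} for the four cases, with $m=2$ and general $n$ handled uniformly. In the small cases $(3,4),(3,5),(4,4)$ I would do a direct linear-algebra check in degree~2 modulo $P_2(H)_2$, possibly confirmed by Macaulay2 as the paper does for $(4,5)$. For $m=2$ I expect an induction on $n$ via the retraction setting the last variable to zero, analogous to \Cref{lem:retract}.
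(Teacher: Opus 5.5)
\textbf{Lower bound: the single-variable reduction fails in cases (2)--(4).} You reduce to showing that $P_2(H)\colon x_k$ contains no linear form for a single variable $x_k$ of each minimal prime. You also claim that $x_kf\in P_2(H)$ with $f$ linear forces $f=0$ for every $k$. This is false as soon as $m\geq 3$ and $n\geq 4$, for instance
\[
2x_1x_5=(x_1x_5+x_3^2)+(x_1x_5+x_2x_4)-(x_2x_4+x_3^2)\in P_2(H).
\]
For $(m,n)=(3,4)$ one checks similarly that $x_2x_4,x_3^2,x_4^2,x_3x_5,x_2x_6\in P_2(H)$. So every variable of $(x_1,\dots,x_5)$ and of $(x_2,\dots,x_6)$ multiplies some variable into $P_2(H)$. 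No choice of a single $x_k$ works, and no specialization argument will yield $f=0$.

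The missing idea is to colon by several variables of the prime at once. The paper does this with $P_2(H)\colon(x_1,x_2)$ for $(3,4)$: the condition $x_1f\in P_2(H)$ leaves only the coefficient of $x_5$, and $x_2x_5\notin P_2(H)$ kills it. A convenient device is the grading $\deg x_i=i$. For it $P_2(H)$ is homogeneous, since each generator $x_ax_b+x_cx_d$ has $a+b=c+d$. Hence $P_2(H)\colon(x_k,x_l)$ contains a linear form if and only if $x_kx_j,x_lx_j\in P_2(H)$ for some $j$. With this you will see that for $(3,5)$ even the pair $(x_1,x_2)$ used in the paper is not enough, because $x_1x_j,x_2x_j\in P_2(H)$ for $j=5,6,7$. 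The pair $(x_1,x_6)$, and symmetrically $(x_2,x_7)$, does work. Only in case (1) does your single-variable reduction (with $x_1$, resp.\ $x_{n+1}$) go through, and there it agrees with the paper.

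\textbf{Upper bound: $\mathfrak{m}$ is not always associated, and several sample $g$ fail.} For $(m,n)=(3,5)$ the paper records, from \cite[Proposition~6.4]{GGS07}, that $\ass(P_2(H))=\{(x_1,\dots,x_6),(x_2,\dots,x_7)\}$. So no $g$ satisfies $P_2(H)\colon g=\mathfrak{m}$, and you must aim at a minimal prime instead, e.g.\ $P_2(H)\colon x_2x_3=(x_2,\dots,x_7)$ as in the paper. Your sample choices fail as follows:
\begin{enumerate}
\item $x_4^2\in P_2(H)$ for $(3,4)$, $(3,5)$ and $(4,4)$.
\item $x_4x_5\in P_2(H)$ for $(3,5)$.
\item For $(3,4)$, $x_4x_5x_6\notin P_2(H)$: localizing at $x_6$, the quotient becomes $\kk[x_5,x_6^{\pm1}]/(x_5^4)$ with $x_4=-x_5^2/x_6$.
\item For $(m,n)=(2,4)$, $x_3x_4x_5\notin P_2(H)$: localizing at $x_5$, the quotient becomes $\kk[x_4,x_5^{\pm1}]/(x_4^4)$ with $x_3=-x_4^2/x_5$.
\end{enumerate}
In the last two cases step 2 of your plan fails, so the colon is not $\mathfrak{m}$. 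Choices that do work are $g=x_2x_n$ for $m=2$ and $g=x_2x_5$ for $(3,4)$ and $(4,4)$. Finally, for $m=2$ the minimal primes are $(x_1,\dots,x_n)$ and $(x_2,\dots,x_{n+1})$, not primes with a quadratic generator. Your alternative route via Lemma~\ref{lem:colon-equality} is therefore aimed at primes that do not exist.
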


\begin{proof}
    \begin{enumerate}
        \item Assume that $m=2$ and $n\geq 4$. It follows from the proof of \cite[Proposition~5.1 (1)]{GGS07} that $P_2(H) \colon x_2x_n = (x_1,x_2,\ldots, x_{n+1})$. Therefore,  $\vv(P_2(H)) \leq 2$ by \cref{lem:upper-bound-v-number}. 
        
        It remains to show that $\vv(P_2(H))\ge 2$. Recall that
        \[
        \ass(P_2(H)) = \{(x_1,x_2,\ldots, x_{n}), \ (x_2,x_3,\ldots, x_{n+1}),\ (x_1,x_2,\ldots, x_{n+1}) \}.
        \]
        By \cref{lem:alpha-smaller-for-colon-of-small-ideals}, we have
        \begin{align*}
            \alpha((P_2(H)\colon (x_1,x_2,\dots, x_n))/ P_2(H)) &\geq \alpha ((P_2(H)\colon x_1)/ P_2(H)),\\
            \alpha((P_2(H)\colon (x_2,x_3,\dots, x_{n+1}))/ P_2(H)) &\geq \alpha ((P_2(H)\colon x_{n+1})/ P_2(H)),\\
            \alpha((P_2(H)\colon (x_1,x_2,\dots, x_{n+1}))/ P_2(H)) &\geq \alpha ((P_2(H)\colon x_1)/ P_2(H)).
        \end{align*}
        By \cref{lem:lower-bound-v-number}, to show $\vv(P_2(H))\geq 2$,   
        it suffices to prove that $P_2(H)\colon x_1$ does not contain a linear form, as the arguments for $P_2(H)\colon x_{n+1}$ would follow by symmetry. Indeed, suppose that $a_1x_1+\cdots +a_{n+1}x_{n+1}\in P_2(H)\colon x_1$, i.e., 
        \[
        \sum_{i=1}^{n+1} a_ix_1x_{i}= a_1x_1^2+\cdots +a_{n+1}x_1x_{n+1} \in P_2(H),
        \]
        for some $a_1,\dots, a_{n+1}\in \kk$. We will show that $a_1=a_2=\cdots = a_{n+1}=0$. 
        We have the following claim.
        \begin{claim}\label{clm:reduction-2xn}
            For each $k\in [1,\lfloor (n+1)/2\rfloor]$, we have
            \[
            \sum_{i=2k-1}^{n+1} a_ix_kx_{i-k+1}\in P_2(H)
            \]
            and $a_{j}=0$ for any $j\in \{2k-1,2k\}$, as long as $j\leq n+1$.
        \end{claim}

        \begin{proof}
            Recall from \cite[Theorem~2.7]{GGS07} that  
            \[
            \{ x_ix_{i+t+1}     +x_{i+1}x_{i+t}\colon i\in [1,n-1], t\in [1,n-i] \} \cup \Omega_1,
            \]
            where $\Omega_1$ is a certain set of polynomials of degree at least 3, is a Gr\"obner basis of $P_2(H)$. As the polynomials of interest will be of degree 2, $\Omega_1$ is of no consequence, and thus we do not specify what it is. 

            We prove the claim by induction on $k$. First consider the base case $k=1$. We already have $\sum_{i=1}^{n+1} a_ix_1x_{i}\in P_2(H)$. Since $x_1^2, x_1x_2 \notin \ini(P_2(H))$, it follows that $a_1 = a_2 = 0$.
This concludes the base case $k=1$. 
            
            By induction, we can assume that $k\geq 2$,
            \[
            \sum_{i=2k-3}^{n+1} a_ix_{k-1}x_{i-k+2}\in P_2(H),
            \] 
            and $a_{2k-3}=a_{2k-2}=0$. We will reduce the polynomial $\sum_{i=2k-3}^{n+1} a_ix_{k-1}x_{i-k+2} = \sum_{i=2k-1}^{n+1} a_ix_{k-1}x_{i-k+2}$ using the above Gr\"obner basis:
            \begin{align*}
                &\sum_{i=2k-1}^{n+1} a_ix_{k-1}x_{i-k+2}= a_{2k-1}x_{k-1}x_{k+1}+ \sum_{i=2k}^{n+1} a_ix_{k-1}x_{i-k+2}\\
                &\xrightarrow{x_{k-1}x_{k+1}+x_k^2} \left( \sum_{i=2k}^{n+1} a_ix_{k-1}x_{i-k+2}\right) - \left( a_{2k-1}x_k^2 \right)\\
                &=\left( \sum_{i=2k}^{n+1} a_ix_{k-1}x_{i-k+2}\right) - \left(\sum_{i=2k-1}^{2k-1} a_{i}x_kx_{i-k+1}. \right)
            \end{align*}
            We note that the above process works regardless of whether $a_{2k-1}$ is $0$ or not. Repeating this process, the polynomial $\sum_{i=2k-1}^{n+1} a_ix_{k-1}x_{i-k+2}$ reduces to $-\sum_{i=2k-1}^{n+1} a_ix_{k}x_{i-k+1}$. In particular, this implies that 
            \[
            \sum_{i=2k-1}^{n+1} a_ix_{k}x_{i-k+1}\in P_2(H),
            \]
            as desired. It remains to show that $a_{2k-1}=a_{2k}=0$, as long as the index makes sense. Indeed, if $a_{2k-1}\neq 0$ or $a_{2k}\neq 0$, then the leading term of $\sum_{i=2k-1}^{n+1} a_ix_{k}x_{i-k+1}$ is either $a_{2k-1}x_{k}^2$ or $a_{2k}x_{k}x_{k+1}$, both of which are not in the initial ideal of $P_2(H)$, a contradiction. Therefore, we have $a_{2k-1}=a_{2k}=0$.  This proves the claim.
        \end{proof}
        
        \cref{clm:reduction-2xn} in particular implies that $a_1=a_2=\cdots = a_{n+1}=0$, as desired. This concludes (1).
        \item Assume that $(m,n)=(3,4)$. It follows from the proof of \cite[Proposition 5.3]{GGS07} that   $P_2(X)\colon x_2x_5 = (x_1,\ldots,x_6)$. Therefore $\vv(P_2(H)) \leq 2$ by \cref{lem:upper-bound-v-number}. It remains to show that $\vv(P_2(H))\geq 2$. Recall from \cite[Theorem~4.3 and Proposition~5.3]{GGS07} that
        \[
        \ass(P_2(H)) = \{(x_1,x_2,\ldots, x_{5}), \ (x_2,x_3,\ldots, x_{6}),\ (x_1,x_2,\ldots, x_{6}) \}.
        \]
        By similar arguments to those in the proof of (1), it suffices to prove that $P_2(H)\colon (x_1,x_2)$ does not contain a linear form. Indeed, suppose that \[
        a_1x_1+\cdots +a_{6}x_{6}\in P_2(H)\colon (x_1,x_2).\]
        We recall from \cite[Proposition~2.9]{GGS07} that
        \begin{multline*}
            \{ x_3^2, \ x_4^2, \ x_1x_5, \  x_2x_4, \  x_2x_6, \  x_3x_6,\  x_1x_3+x_2^2,\ x_1x_4+x_2x_3, \ x_1x_6+x_3x_4, \\
              x_2x_5+x_3x_4, \ x_4x_6+x_5^2,\ x_3x_6+x_4x_5  \} \cup \Omega_2,
        \end{multline*}
        where $\Omega_2$ is a certain set of polynomials of degree at least 3, is a Gr\"obner basis of $P_2(H)$.

        Since $a_1x_1+\cdots + a_6x_6\in P_2(H):(x_1)$, we have
        \[
        a_1x_1^2+a_2x_1x_2+a_3x_1x_3+a_4x_1x_4+a_5x_1x_5+a_6x_1x_6 \in P_2(H).
        \]
        As $x_1^2$ and $x_1x_2$ are not in the initial ideal of $P_2(H)$, we must have $a_1=a_2=0$. Next we reduce the above polynomial above using our Gr\"obner basis:
        \begin{align*}
            a_3x_1x_3+a_4x_1x_4+a_5x_1x_5+a_6x_1x_6 &\xrightarrow{x_1x_3+x_2^2} a_4x_1x_4 + a_5x_1x_5+ a_6x_1x_6 - a_3x_2^2\\
            &\xrightarrow{x_1x_4+x_2x_3}  a_5x_1x_5+ a_6x_1x_6 - a_3x_2^2 - a_4x_2x_3\\
            &\xrightarrow{x_1x_5} a_6x_1x_6 - a_3x_2^2 - a_4x_2x_3\\
            &\xrightarrow{x_1x_6+x_3x_4} - a_3x_2^2 - a_4x_2x_3- a_6x_3x_4.
        \end{align*}
        As none of the three monomials in the above polynomial is in the initial ideal of $P_2(X)$, we must have $a_3=a_4=a_6 = 0$. We therefore have 
        \[
        a_1x_1+\cdots + a_6x_6 = a_5x_5.
        \]
        Now we use the hypothesis that $a_5x_5\in P_2(H)\colon x_2$ to deduce that $a_5x_2x_5\in P_2(H)$, which implies that $a_5=0$ as $x_2x_5$ is not in the initial ideal of $P_2(H)$. To sum up, $P_2(H)\colon (x_1,x_2)$ does not contain a linear form, as desired.
        \item Assume that $(m,n)=(3,5)$. It follows from similar arguments as in the proof of \cref{thm:Hankel-v-number-1} (2) that $P_2(X)\colon x_2x_3 = (x_2,\ldots,x_7)$. Therefore $\vv(P_2(H)) \leq 2$ by \cref{lem:upper-bound-v-number}. It remains to show that $\vv(P_2(H))\geq 2$. Recall from \cite[Proposition~6.4]{GGS07} that
        \[
        \ass(P_2(H)) = \{(x_1,x_2,\ldots, x_{6}), \quad (x_2,x_3,\ldots, x_{7}) \}.
        \]
        By similar arguments to those in the proof of (1), it suffices to prove that $P_2(H)\colon (x_1,x_2)$ does not contain a linear form. This follows from similar arguments to those in the proof of (2).
        \item Assume that $(m,n)=(4,4)$. It follows from the proof of \cite[Proposition 5.4]{GGS07} that   $P_2(X)\colon x_2x_5 = (x_1,\ldots,x_7)$. Therefore $\vv(P_2(H)) \leq 2$ by \cref{lem:upper-bound-v-number}. It remains to show that $\vv(P_2(H))\geq 2$. Recall from \cite[Theorem~4.3 and Proposition~5.4]{GGS07} that
        \[
        \ass(P_2(H)) = \{(x_1,x_2,\ldots, x_{6}), \ (x_2,x_3,\ldots, x_{7}), \ (x_1,x_2,\ldots, x_{7}) \}.
        \]
        By similar arguments to those in the proof of (1), it suffices to prove that $P_2(H)\colon (x_1,x_2)$ does not contain a linear form. This follows from similar arguments to those in the proof of  (2). \qedhere
    \end{enumerate}
\end{proof}

Finally, in the remaining two cases, we have $\vv(P_2(H))=3$.

\begin{theorem}\label{thm:Hankel-v-number-3}
    Let $H$ be a generic Hankel matrix as in (\ref{equ:Hankel}). Assume that one of the following holds:
    \begin{enumerate}
        \item $(m,n)=(2,3)$;
        \item $(m,n)=(3,3)$.
    \end{enumerate}
    Then $\vv(P_2(H))= 3$.
\end{theorem}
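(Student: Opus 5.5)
The plan is to treat the two cases $(m,n)=(2,3)$ and $(m,n)=(3,3)$ separately. In each case I prove the upper bound $\vv(P_2(H))\le 3$ with \Cref{lem:upper-bound-v-number} and the lower bound $\vv(P_2(H))\ge 3$ with \Cref{lem:lower-bound-v-number}, following the template of \Cref{thm:Hankel-v-number-2}. Throughout I use the reversal symmetry $x_i\leftrightarrow x_{m+n-i}$ of the Hankel matrix, which preserves $P_2(H)$. It lets me treat only the associated primes "on the $x_1$ side" and transfer the result to the $x_{m+n-1}$ side.

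\textbf{Upper bound.} I will read off the associated primes from \cite{GGS07}, namely \cite[Theorem~4.3]{GGS07} and the propositions for small sizes. I expect them to be among $(x_1,\dots,x_{m+n-2})$, $(x_2,\dots,x_{m+n-1})$ and possibly the homogeneous maximal ideal. Following the corresponding proof in \cite{GGS07}, I then pick a cubic monomial $g$; natural candidates are $x_2^2x_3$ or $x_2x_3x_4$, with the choice dictated by the lex Gr\"obner basis. The goal is that $P_2(H)\colon g$ equals one of these primes $\p$. To verify this I will check the four hypotheses of \Cref{lem:colon-equality}:
\begin{enumerate}
\item $g\notin\p$, by choosing $g$ to involve a variable outside $\p$;
\item $\p\subseteq P_2(H)\colon g$, by writing each $x_ig$ explicitly as a combination of $2\times 2$ subpermanents, as in the proof of \Cref{thm:Hankel-v-number-1}(2), or by reducing $x_ig$ to zero modulo the Gr\"obner basis.
\end{enumerate}
If $\p$ is the maximal ideal, I will instead use the argument of \Cref{thm:Hankel-v-number-1}(2). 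That is, $P_2(H)\colon g$ is proper, hence primary to the maximal ideal, and a Gr\"obner reduction of $g x_{m+n-1}^k$ rules out a strictly larger colon.

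\textbf{Lower bound.} By \Cref{lem:lower-bound-v-number} and \Cref{lem:alpha-smaller-for-colon-of-small-ideals}, together with the symmetry above, it suffices to show one thing: the ideal $P_2(H)\colon(x_1,x_2)$ contains no linear form and no quadric outside $P_2(H)$. Here $(x_1,x_2)$ lies in every associated prime on the $x_1$ side.

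For linear forms I would repeat the argument in \Cref{thm:Hankel-v-number-2}(2). Starting from $\ell=\sum a_ix_i$, I reduce $x_1\ell$ and $x_2\ell$ modulo the quadratic part of the Gr\"obner basis from \cite{GGS07}. Surviving standard monomials force every $a_i=0$.

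For quadrics I take $f=\sum a_{ij}x_ix_j$ and first subtract an element of $P_2(H)$, so that $f$ is supported on standard monomials, i.e. monomials outside $\ini(P_2(H))$. There are only $3$ quadric generators when $(m,n)=(2,3)$ and about $6$ when $(m,n)=(3,3)$, so few monomials remain. I then reduce $x_1f$ and $x_2f$ modulo the full Gr\"obner basis, including its cubic elements, and collect the coefficients of the resulting standard monomials. These give a linear system in the $a_{ij}$. I expect it to force $f=0$, possibly after first applying \Cref{lem:Grobner-initial} to bound $\ini(f)$ as in \Cref{lemma:n=3}.

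\textbf{Main obstacle.} The hard part is the quadric computation for $(m,n)=(3,3)$, where the Gr\"obner basis in $5$ variables contains several cubic and quartic elements and the reductions branch. To keep it manageable, I first use \Cref{lem:Grobner-initial} to show that $\ini(f)$ must be small in lex order. That kills the top coefficients, and I then peel off the remaining coefficients one at a time, exactly as in \Cref{lemma:n=3}. If this becomes unwieldy, a fallback is to specialize some variables to $0$ to isolate individual coefficients, as in \Cref{lem:alpha-generic-3}.
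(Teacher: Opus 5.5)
\textbf{The lower bound fails as set up.} You reduce to showing that $P_2(H)\colon(x_1,x_2)$ contains no quadric outside $P_2(H)$. That statement is false in both cases: $x_3^2$ is such a quadric. Take the subpermanents of the first two rows, $g_1=x_1x_3+x_2^2$, $g_2=x_1x_4+x_2x_3$, $g_3=x_2x_4+x_3^2$. Then $x_2g_3-x_4g_1+x_3g_2=2x_2x_3^2$ and $x_3g_1+x_2g_2-x_1g_3=2x_2^2x_3$. Hence $x_2x_3^2\in P_2(H)$ and $x_1x_3^2=x_3g_1-x_2^2x_3\in P_2(H)$. On the other hand, $x_3^2$ is not a linear combination of the quadratic subpermanents, so $x_3^2\notin P_2(H)$. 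This holds for $(m,n)=(2,3)$ and, since only the first two rows are used, also for $(3,3)$.

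So the linear system you get from reducing $x_1f$ and $x_2f$ has the nonzero solution $f=x_3^2$. Passing to $(x_1,x_2)$ via \Cref{lem:alpha-smaller-for-colon-of-small-ideals} therefore gives only $\vv(P_2(H))\ge 2$. Shrinking the prime was harmless for linear forms in \Cref{thm:Hankel-v-number-2}, but for quadrics you must keep the whole prime, $(x_1,x_2,x_3)$ resp.\ $(x_1,x_2,x_3,x_4)$. In particular you need the condition $x_3f\in P_2(H)$, which kills $x_3^2$ because $x_3^3\notin P_2(H)$:
\begin{itemize}
\item For $(2,3)$, $x_3^3$ is a standard monomial for the lex basis.
\item For $(3,3)$, grade by $\deg x_i=i$. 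The weight-$9$ cubic part of $P_2(H)$ is spanned by $x_5g_1,x_4g_2,x_3g_3,x_3g_4,x_2g_5,x_1g_6$, where $g_4=x_1x_5+x_3^2$, $g_5=x_2x_5+x_3x_4$, $g_6=x_3x_5+x_4^2$. All six are annihilated by the functional that is $1$ on $x_1x_3x_5,x_2x_3x_4$ and $-1$ on $x_1x_4^2,x_2^2x_5,x_3^3$, and this functional is nonzero on $x_3^3$.
\end{itemize}
This is the paper's route: it reduces $x_1f$, then $x_3f$, then $x_1f$ again.

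\textbf{The upper bound plan misreads \Cref{lem:colon-equality}.} With $Q=\p$ and $x^n=g$, the lemma needs $g\notin\p$, not merely that $g$ involves a variable outside $\p$. For $(2,3)$ the maximal ideal is not associated. The only cubic monomials outside one of the two primes are $x_1^3$ and $x_4^3$, and their colons are not prime. For instance, $x_1^4$ and $x_1^3x_2$ are standard, so $P_2(H)\colon x_1^3$ contains neither $x_1$ nor $x_2$, and hence is not an associated prime.

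So any monomial witness lies in its own prime; the paper uses $P_2(H)\colon x_1x_2x_3=(x_2,x_3,x_4)$. You then need the argument of \Cref{thm:Hankel-v-number-1}(2), adapted to a non-maximal prime. First check $\p\subseteq P_2(H)\colon g$. A strictly larger homogeneous colon would then contain a power of the remaining variable, and a Gr\"obner reduction rules this out.

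Of your two candidates, $x_2^2x_3$ already lies in $P_2(H)$ and is useless. The other, $x_2x_3x_4\equiv -x_3^3$, does work: $P_2(H)\colon x_3^3$ equals $(x_1,x_2,x_3)$ for $(2,3)$ and $\mathfrak{m}$ for $(3,3)$.
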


\begin{proof}
    \begin{enumerate}
        \item Assume that $(m,n)=(2,3)$. It follows from similar arguments as in the proof of \cref{thm:Hankel-v-number-1} (2) that $P_2(X)\colon x_1x_2x_3 = (x_2,x_3,x_4)$. Therefore $\vv(P_2(H)) \leq 3$ by \cref{lem:upper-bound-v-number}. It remains to show that $\vv(P_2(H))\geq 3$. Recall from \cite[Proposition~6.3]{GGS07} that
        \[
        \ass(P_2(H)) = \{(x_1,x_2,x_3), \quad (x_2,x_3,x_4) \}.
        \]
        By \cref{lem:lower-bound-v-number}, it suffices to show that $\alpha((P_2(H)\colon (x_1,x_2,x_3))/P_2(H))\geq 3$, as the proof for $\alpha((P_2(H)\colon (x_2,x_3,x_4))/P_2(H))\geq 3$ would follow by symmetry. To do this, we employ the method in the proof of \cref{lem:alpha-generic-3}: Prove  by contradiction  that $(P_2(H)\colon (x_1,x_2,x_3))/P_2(H)$ has no quadratic generator. Suppose that
        there exists a quadratic polynomial $f\in P_2(H)\colon (x_1,x_2,x_3)$. It suffices to show that $f\in P_2(H)$. Since we can replace $f$ with $f-g$ for any $g\in P_2(H)$, and $x_1x_3+x_2^2,\ x_1x_4+x_2x_3,\ x_2x_4+x_3^2\in P_2(H)$, we can assume that
        \[
        f= a_1x_1^2+a_{12}x_1x_2+a_2x_2^2+a_{23}x_2x_3+a_3x_3^2+a_{34}x_3x_4+a_4x_4^2.
        \]
        Before proceeding, we recall from \cite[Theorem~2.7]{GGS07} that the Gr\"obner basis of $P_2(H)$~is   
        \[
        \{x_1x_3+x_2^2,\ x_1x_4+x_2x_3,\ x_2x_4+x_3^2,\ x_2^2x_3,\ x_2x_3^2, \ x_2^4,\ x_3^4 \}.
        \]
        We have
        \begin{align*}
            a_1x_1^3+a_{12}x_1^2x_2+a_2x_1x_2^2+a_{23}x_1x_2x_3+a_3x_1x_3^2+a_{34}x_1x_3x_4+a_4x_1x_4^2=x_1f \in P_2(H).
        \end{align*}
        Since $\ini(x_1f)\in \ini(P_2(H))$, we must have $a_1=a_{12}=a_2=0$. Thus
        \[
        f=a_{23}x_2x_3+a_3x_3^2+a_{34}x_3x_4+a_4x_4^2.
        \]        
        We then reduce $x_3f\in P_2(H)$ with respect to the above Gr\"obner basis:
        \begin{align*}
            & \ \ \ \ x_3f= \mathbf{a_{23}x_2x_3^2}+a_3x_3^3+a_{34}x_3^2x_4+a_4x_3x_4^2\\
            &\xrightarrow{x_2x_3^2} a_3x_3^3+a_{34}x_3^2x_4+a_4x_3x_4^2.
        \end{align*}
        Since none of the monomial summand above is in $\ini(P_2(H))$, we must have $a_3=a_{34}=a_4=0$. Thus we have $f=a_{23}x_2x_3$. Now we reduce $x_1f\in P_2(H)$ with respect to the above Gr\"obner basis:
        \[
        x_1f=a_{23}x_1x_2x_3 \xrightarrow{x_1x_3+x_2^2} -a_{23}x_2^3,
        \]
        which must reduce to $0$. Thus we have $a_{23}=0$. In other words, we have $f=0$, a contradiction, as desired.
        \item Assume that $(m,n)=(3,3)$. It follows from the proof of \cite[Proposition~5.2]{GGS07} that   $P_2(X)\colon x_1x_3x_5 = (x_1,\ldots,x_5)$. Therefore $\vv(P_2(H)) \leq 3$ by \cref{lem:upper-bound-v-number}. It remains to show that $\vv(P_2(H))\geq 3$. Recall from \cite[Theorem~4.3 and Proposition~5.2]{GGS07} that
        \[
        \ass(P_2(H)) = \{(x_1,x_2,x_3, x_{4}), \ (x_2,x_3,x_4, x_{5}), \ (x_1,x_2,x_3,x_4, x_{5}) \}.
        \]
        By \cref{lem:lower-bound-v-number}, it suffices to show that $\alpha((P_2(H)\colon (x_1,x_2,x_3,x_4))/P_2(H))\geq 3$, as the proof for $\alpha((P_2(H)\colon (x_2,x_3,x_4,x_5))/P_2(H))\geq 3$ would follow by symmetry, and  $\alpha((P_2(H)\colon (x_1,x_2,x_3,x_4,x_5))/P_2(H))\geq 3$ would follow from \cref{lem:alpha-smaller-for-colon-of-small-ideals}. As the arguments are similar to those in (1), we omit the details. \qedhere
    \end{enumerate}
\end{proof}

We conclude the paper with a natural question:

\begin{question}
    What is the value of $\vv(P_t(X))$, where $t\geq 3$, and $X$ is a generic/generic symmetric/generic Hankel matrix?
\end{question}

\bibliographystyle{amsplain}
\bibliography{refs}

@article {LS00,
    AUTHOR = {Laubenbacher, Reinhard C. and Swanson, Irena},
     TITLE = {Permanental ideals},
   JOURNAL = {J. Symbolic Comput.},
  FJOURNAL = {Journal of Symbolic Computation},
    VOLUME = {30},
      YEAR = {2000},
    NUMBER = {2},
     PAGES = {195--205},
      ISSN = {0747-7171,1095-855X},
   MRCLASS = {13P10 (13B22 13C40 13H10)},
  MRNUMBER = {1777172},
MRREVIEWER = {Jos\'e\ F.\ Andrade},
       DOI = {10.1006/jsco.2000.0363},
       URL = {https://doi.org/10.1006/jsco.2000.0363},
}

@article {GGS07,
    AUTHOR = {Grieco, E. and Guerrieri, A. and Swanson, I.},
     TITLE = {Permanental ideals of {H}ankel matrices},
   JOURNAL = {Abh. Math. Sem. Univ. Hamburg},
  FJOURNAL = {Abhandlungen aus dem Mathematischen Seminar der Universit\"at
              Hamburg},
    VOLUME = {77},
      YEAR = {2007},
     PAGES = {39--58},
      ISSN = {0025-5858,1865-8784},
   MRCLASS = {13P10 (13C40 13F20)},
  MRNUMBER = {2379328},
MRREVIEWER = {Aihua\ Li},
       DOI = {10.1007/BF03173488},
       URL = {https://doi.org/10.1007/BF03173488},
}

@article {Chau,
    AUTHOR = {Chau, Trung},
     TITLE = {Permanental ideals of symmetric matrices},
   JOURNAL = {Proc. Amer. Math. Soc.},
  FJOURNAL = {Proceedings of the American Mathematical Society},
    VOLUME = {154},
      YEAR = {2026},
    NUMBER = {1},
     PAGES = {119--132},
      ISSN = {0002-9939,1088-6826},
   MRCLASS = {13C05 (05E40 13C40 13P10 15A15)},
  MRNUMBER = {5002074},
       DOI = {10.1090/proc/17512},
       URL = {https://doi.org/10.1090/proc/17512},
}

@article{Chau-F,
  author    = {Chau, Trung},
  title     = {The {$F$}-singularities of algebras defined by permanents},
  journal   = {Collectanea Mathematica},
  year      = {2025},
  publisher = {Springer Nature},
  doi       = {10.1007/s13348-025-00494-8},
  url       = {https://link.springer.com/article/10.1007/s13348-025-00494-8}
}

@Article{GRV21,
AUTHOR = {Grisalde, Gonzalo and Reyes, Enrique and Villarreal, Rafael H.},
TITLE = {Induced Matchings and the v-Number of Graded Ideals},
JOURNAL = {Mathematics},
VOLUME = {9},
YEAR = {2021},
NUMBER = {22},
ARTICLE-NUMBER = {},
URL = {https://www.mdpi.com/2227-7390/9/22/2860},
ISSN = {2227-7390},
PAGES={2860},
DOI = {10.3390/math9222860},
}

@article {CSTPV20,
    AUTHOR = {Cooper, Susan M. and Seceleanu, Alexandra and Toh{\u{a}}neanu, {\c{S}}tefan O. and Pinto, Maria Vaz and Villarreal, Rafael H.},
     TITLE = {Generalized minimum distance functions and algebraic
              invariants of {G}eramita ideals},
   JOURNAL = {Adv. in Appl. Math.},
  FJOURNAL = {Advances in Applied Mathematics},
    VOLUME = {112},
      YEAR = {2020},
     PAGES = {101940, 34},
      ISSN = {0196-8858,1090-2074},
   MRCLASS = {13P25 (11T71 13C40 14G50 94B27)},
  MRNUMBER = {4011111},
MRREVIEWER = {C\'icero\ Carvalho},
       DOI = {10.1016/j.aam.2019.101940},
       URL = {https://doi.org/10.1016/j.aam.2019.101940},
}

@article {JV21,
    AUTHOR = {Jaramillo, Delio and Villarreal, Rafael H.},
     TITLE = {The v-number of edge ideals},
   JOURNAL = {J. Combin. Theory Ser. A},
  FJOURNAL = {Journal of Combinatorial Theory. Series A},
    VOLUME = {177},
      YEAR = {2021},
     PAGES = {Paper No. 105310, 35},
      ISSN = {0097-3165,1096-0899},
   MRCLASS = {05E40 (05C65 13A15)},
  MRNUMBER = {4139109},
MRREVIEWER = {Margherita\ Barile},
       DOI = {10.1016/j.jcta.2020.105310},
       URL = {https://doi.org/10.1016/j.jcta.2020.105310},
}

@article {SS22,
    AUTHOR = {Saha, Kamalesh and Sengupta, Indranath},
     TITLE = {The v-number of monomial ideals},
   JOURNAL = {J. Algebraic Combin.},
  FJOURNAL = {Journal of Algebraic Combinatorics. An International Journal},
    VOLUME = {56},
      YEAR = {2022},
    NUMBER = {3},
     PAGES = {903--927},
      ISSN = {0925-9899,1572-9192},
   MRCLASS = {13F55 (05C70 05E40 13A15 13A70)},
  MRNUMBER = {4491066},
MRREVIEWER = {Somayeh\ Bandari},
       DOI = {10.1007/s10801-022-01137-y},
       URL = {https://doi.org/10.1007/s10801-022-01137-y},
}

@article {SK24,
    AUTHOR = {Saha, Kamalesh and Kotal, Nirmal},
     TITLE = {On the v-number of {G}orenstein ideals and {F}robenius powers},
   JOURNAL = {Bull. Malays. Math. Sci. Soc.},
  FJOURNAL = {Bulletin of the Malaysian Mathematical Sciences Society},
    VOLUME = {47},
      YEAR = {2024},
    NUMBER = {6},
     PAGES = {Paper No. 167, 17},
      ISSN = {0126-6705,2180-4206},
   MRCLASS = {13H10 (05E40 13A35 13F20)},
  MRNUMBER = {4795771},
MRREVIEWER = {Mohammad\ Jarrar},
       DOI = {10.1007/s40840-024-01763-8},
       URL = {https://doi.org/10.1007/s40840-024-01763-8},
}

@article {ASS24,
    AUTHOR = {Ambhore, Siddhi Balu and Saha, Kamalesh and Sengupta,
              Indranath},
     TITLE = {The v-number of binomial edge ideals},
   JOURNAL = {Acta Math. Vietnam.},
  FJOURNAL = {Acta Mathematica Vietnamica},
    VOLUME = {49},
      YEAR = {2024},
    NUMBER = {4},
     PAGES = {611--628},
      ISSN = {0251-4184,2315-4144},
   MRCLASS = {13F20 (05E40 13F65)},
  MRNUMBER = {4834446},
MRREVIEWER = {Aryampilly\ V.\ Jayanthan},
       DOI = {10.1007/s40306-024-00540-w},
       URL = {https://doi.org/10.1007/s40306-024-00540-w},
}

@article {DJS25,
    AUTHOR = {Dey, Deblina and Jayanthan, A. V. and Saha, Kamalesh},
     TITLE = {On the {${\rm v}$}-number of binomial edge ideals of some
              classes of graphs},
   JOURNAL = {Internat. J. Algebra Comput.},
  FJOURNAL = {International Journal of Algebra and Computation},
    VOLUME = {35},
      YEAR = {2025},
    NUMBER = {1},
     PAGES = {119--143},
      ISSN = {0218-1967,1793-6500},
   MRCLASS = {13F65 (05C69 05E40 13F55)},
  MRNUMBER = {4869330},
       DOI = {10.1142/S0218196724500607},
       URL = {https://doi.org/10.1142/S0218196724500607},
}

@article {KMT25,
    AUTHOR = {Kataoka, Tatsuya and Muta, Yuji and Terai, Naoki},
     TITLE = {The v-numbers of {S}tanley-{R}eisner ideals from the viewpoint
              of {A}lexander dual complexes},
   JOURNAL = {J. Algebra},
  FJOURNAL = {Journal of Algebra},
    VOLUME = {684},
      YEAR = {2025},
     PAGES = {589--611},
      ISSN = {0021-8693,1090-266X},
   MRCLASS = {13F55 (13H10)},
  MRNUMBER = {4942694},
       DOI = {10.1016/j.jalgebra.2025.07.017},
       URL = {https://doi.org/10.1016/j.jalgebra.2025.07.017},
}

@article {Fic25,
    AUTHOR = {Ficarra, Antonino},
     TITLE = {Simon conjecture and the v-number of monomial
              ideals},
   JOURNAL = {Collect. Math.},
  FJOURNAL = {Collectanea Mathematica},
    VOLUME = {76},
      YEAR = {2025},
    NUMBER = {3},
     PAGES = {477--492},
      ISSN = {0010-0757,2038-4815},
   MRCLASS = {13F20 (05C70 05E40 13F55)},
  MRNUMBER = {4950310},
       DOI = {10.1007/s13348-024-00441-z},
       URL = {https://doi.org/10.1007/s13348-024-00441-z},
}

@article {BM25,
    AUTHOR = {Biswas, Prativa and Mandal, Mousumi},
     TITLE = {A study of {${\rm v}$}-number for some monomial ideals},
   JOURNAL = {Collect. Math.},
  FJOURNAL = {Collectanea Mathematica},
    VOLUME = {76},
      YEAR = {2025},
    NUMBER = {3},
     PAGES = {667--682},
      ISSN = {0010-0757,2038-4815},
   MRCLASS = {05E40 (05C38 05C69 13F20 13F55)},
  MRNUMBER = {4950319},
       DOI = {10.1007/s13348-024-00451-x},
       URL = {https://doi.org/10.1007/s13348-024-00451-x},
}

@article {Kirkup,
    AUTHOR = {Kirkup, George A.},
     TITLE = {Minimal primes over permanental ideals},
   JOURNAL = {Trans. Amer. Math. Soc.},
  FJOURNAL = {Transactions of the American Mathematical Society},
    VOLUME = {360},
      YEAR = {2008},
    NUMBER = {7},
     PAGES = {3751--3770},
      ISSN = {0002-9947,1088-6850},
   MRCLASS = {13C40 (13-04)},
MRREVIEWER = {Irena\ Swanson},
       DOI = {10.1090/S0002-9947-08-04340-7},
       URL = {https://doi.org/10.1090/S0002-9947-08-04340-7},
}

@article {BCMV25,
    AUTHOR = {Ada Boralevi and Enrico Carlini and Mateusz Micha{\l}ek and Emanuele Ventura},
     TITLE = {On the codimension of permanental varieties},
   JOURNAL = {Adv. Math.},
  FJOURNAL = {Advances in Mathematics},
    VOLUME = {461},
      YEAR = {2025},
     PAGES = {Paper No. 110079, 28},
      ISSN = {0001-8708,1090-2082},
   MRCLASS = {14M12 (05E14 05E40 15A15)},
       DOI = {10.1016/j.aim.2024.110079},
       URL = {https://doi.org/10.1016/j.aim.2024.110079},
}

\end{document}